\DeclareMathOperator{\re}{Re}
\DeclareMathOperator{\0}{O}
\DeclareMathOperator{\lo}{o}
\DeclareMathOperator{\End}{End}
\begin{document}

\numberwithin{equation}{section}
\newtheorem{thm}{Theorem}[section]  
\newtheorem*{thmA}{Theorem A}
\newtheorem*{thmB}{Theorem B}
\newtheorem{sublem}{Lemma}[thm]
\newtheorem{lem}[thm]{Lemma} 
\newtheorem{cor}[thm]{Corollary}
\newtheorem{prop}[thm]{Proposition}
\theoremstyle{remark}
\newtheorem{rem}[thm]{Remark}
\newtheorem{ex}[thm]{Example}
\theoremstyle{definition}
\newtheorem{Def}[thm]{Definition}
%\theoremstyle{exercise}

%% New commands to make life simpler
\newcommand{\R}{\mathbb{R} }
\newcommand{\C}{\mathbb{C} }
\newcommand{\Z}{\mathbb{Z}}
\newcommand{\p}{\mathbb{P}}
\newcommand{\ol}{\overline}
\newcommand{\N}{\mathbb{N}}
\newcommand{\ak}{\re(u_k)}
\newcommand{\sk}{\re(v_k)}
\newcommand{\Oxy}{\Omega^{(x,y)}} % 0th coord
\newcommand{\Oxyh}{\Omega^{(x,y)}_0} % 0th coord final local domain
\newcommand{\Ouv}{\Omega^{(u,v)}} % 1st coord
\newcommand{\Ou}{\Omega^{u}} % pi_1 of 1st coord
\newcommand{\Ouvh}{\Omega^{(u,v)}_0} % 1st coord final local domain
\newcommand{\Ouw}{\Omega^{(u,\omega)}} % 2nd coord
\newcommand{\Ouwh}{\Omega^{(u,\omega)}_0} % 2nd coord final local domain
\newcommand{\Otwh}{\Omega^{(\tau,\omega)}_0} % 3rd coord final local domain
\newcommand{\wt}{\widetilde}
\newcommand{\Szw}{\Sigma^{(z,w)}}
\newcommand{\Suv}{\Sigma^{(u,v)}}
\newcommand{\Sty}{\Sigma^{(t,y)}}
\newcommand{\Sxyo}{\Sigma_0^{(x,y)}}
\newcommand{\Sxy}{\Sigma^{(x,y)}}
\newcommand{\s}{\Sigma}
\newcommand{\Pt}{\widetilde{P}(u,v)}
\newcommand{\Qt}{\widetilde{Q}(u,v)}
\newcommand{\Rt}{\widetilde{R}}
\newcommand{\h}{h}%had been Stilde
\newcommand{\Gt}{\widetilde{G}}
\newcommand{\fo}{f_0}
\newcommand{\fa}{f_1}
\newcommand{\fb}{f_2}
\newcommand{\fc}{f_3}
\newcommand{\Id}{\operatorname{Id}}
\newcommand{\Arg}{\operatorname{Arg}}
\newcommand\blfootnote[1]{%
  \begingroup
  \renewcommand\thefootnote{}\footnote{#1}%
  \addtocounter{footnote}{-1}%
  \endgroup
}

\title{Attracting domains of maps tangent to the identity whose only characteristic direction is non-degenerate}
\author{Sara Lapan}
\begin{abstract}We prove that a holomorphic fixed point germ in two complex variables, tangent to the identity, and whose only characteristic direction is non-degenerate, has a domain of attraction on which the map is conjugate to a translation.  In the case of a global automorphism, the corresponding domain of attraction is a Fatou-Bieberbach domain.
\end{abstract}
\subjclass[2010]{Primary: 37F10; Secondary: 32H50}  
\date{\today}
\maketitle
\pagestyle{myheadings}
\markright{Attracting domain}

%% Assume that $0\in\N$
\section*{Introduction}
In this paper, we will be studying holomorphic fixed point germs on $\C^2$ tangent to the identity whose only characteristic direction is non-degenerate.  See \S\ref{prelim} for definitions.

\begin{thmA}
\label{thmA}
Let $f$ be a holomorphic fixed point germ on $\C^2$ tangent to the identity whose only characteristic direction at the fixed point is non-degenerate.  Then there exists a domain $\Omega\subset\C^2$, with the fixed point on the boundary of $\Omega$, that is invariant under $f$ and on which $f$ is conjugate to a translation $(\tau,\omega)\mapsto (\tau,\omega+1)$.  
\end{thmA}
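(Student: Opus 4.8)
The plan is to reduce to an explicit normal form, analyze the resulting two-variable parabolic dynamics on a carefully chosen petal-times-sector region, and then build Fatou coordinates by hand. First I would use the hypotheses to pin down the leading homogeneous part $(P_\nu,Q_\nu)$ of $f-\Id$. After a linear change placing the unique characteristic direction at $[1:0]$ and rescaling, the fact that $[1:0]$ is the \emph{only} characteristic direction forces the degree-$(\nu+1)$ form $xQ_\nu-yP_\nu$ to vanish to maximal order there, i.e. $xQ_\nu-yP_\nu = c\,y^{\nu+1}$ with $c\neq 0$; equivalently $P_\nu = xA - c\,y^\nu$ and $Q_\nu = yA$ for a homogeneous $A$ of degree $\nu-1$ with $\lambda := A(1,0)\neq 0$ (this last being non-degeneracy). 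The crucial observation is that in the slope coordinate $s=y/x$ the induced dynamics is, to leading order, $x_{n+1}=x_n+\lambda x_n^\nu+\cdots$ and $s_{n+1}=s_n+c\,x_n^{\nu-1}s_n^{\nu+1}+\cdots$. The transverse equation is \emph{nonlinear}: its linear part vanishes, so the Hakim director is $0$ and Hakim's theorem does not apply. This is exactly the obstruction the theorem must overcome.

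Next I would extract the asymptotics. On an attracting petal for the $x$-dynamics one has the classical $x_n^{\nu-1}\sim \frac{-1}{\lambda(\nu-1)\,n}$. Substituting into the transverse equation and passing to $w=s^{-\nu}$ turns it into $w_{n+1}-w_n\sim \beta/n$ with $\beta=\frac{\nu c}{\lambda(\nu-1)}\neq 0$, so $w_n\sim \beta\log n\to\infty$ and hence $s_n\to 0$: the orbit converges to the origin tangentially to the characteristic direction, but only at a logarithmic rate in the transverse variable. I would define $\Omega$ by a petal condition on $x$ together with a half-plane condition on $w=(x/y)^\nu$ chosen so that the drift $w\mapsto w+\beta/n$ pushes $w$ to infinity without ever approaching $0$ (where $s$ would blow up); invariance of $\Omega$ then follows from dominating the neglected higher-order terms against the leading ones, which is delicate precisely because of the logarithmic transverse decay.

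The conjugacy is built from two coordinates. For the time I would take the Fatou coordinate $\omega$ of the $x$-dynamics, $\omega\sim \frac{-1}{\lambda(\nu-1)x^{\nu-1}}$, normalized so that $\omega\circ f=\omega+1$; along orbits $\omega_n\sim n$. For the invariant I would take $\tau\sim w-\beta\log\omega = (x/y)^\nu-\beta\log\omega$: the logarithmic correction is forced by the zero director and is designed so that $\tau\circ f=\tau+\lo(1)$, balancing $w_{n+1}-w_n\sim\beta/n$ against $\log\omega_{n+1}-\log\omega_n\sim 1/n$. The honest $\omega$ and $\tau$ are then produced as the usual telescoping limits $\omega=\lim_n(\omega^{(0)}\circ f^n-n)$ and $\tau=\lim_n \tau^{(0)}\circ f^n$, where $\omega^{(0)},\tau^{(0)}$ denote the leading models above.

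Finally I would verify that $(x,y)\mapsto(\tau,\omega)$ is holomorphic and injective on $\Omega$ (shrinking the sector if necessary), with image containing a set of the form $\{\re\omega>M\}$ times a sector, and that it intertwines $f$ with $(\tau,\omega)\mapsto(\tau,\omega+1)$; the fixed point sits on $\partial\Omega$ since $x=y=0$ corresponds to $\omega=\infty$. I expect the main obstacle to be the convergence of the two Fatou-type limits: unlike the classical setting the transverse coordinate decays only like $(\log n)^{-1/\nu}$, so the increments of $\tau^{(0)}\circ f^n$ become summable only after the $\log\omega$ renormalization, and keeping uniform control of these increments (and of the relevant derivatives, for injectivity) over all of $\Omega$ is where the real work lies.
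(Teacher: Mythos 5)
Your strategy is, up to swapping the roles of the two axes, the same as the paper's: your normal form with $xQ_\nu-yP_\nu=c\,y^{\nu+1}$, $c\neq0$, is Lemma~\ref{linear}; your $w=(x/y)^\nu$ and your Fatou time $\omega\sim-1/(\lambda(\nu-1)x^{\nu-1})$ are, up to constants, the paper's coordinates $u=a(y/x)^k$ and $v=b/y^{k-1}$; the asymptotics $w_n\sim\beta\log n$, the identification of the vanishing director as the obstruction, and the ansatz $\tau\sim w-\beta\log\omega$ all match the paper's construction of $\tau=u-\log\omega+\alpha(u)$.

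The genuine gap is in the convergence of your two telescoping limits, which you flag as ``the real work'' but for which you propose no mechanism for the time coordinate. Because the dynamics along the characteristic direction is coupled to the transverse slope $s=y/x$, the increment of your leading model satisfies $\omega^{(0)}\circ f-\omega^{(0)}-1=\0(s)=\0\left((\log n)^{-1/\nu}\right)$ along orbits, and $\sum_n(\log n)^{-1/\nu}$ diverges; hence $\lim_n\left(\omega^{(0)}\circ f^n-n\right)$ does not exist for $\omega^{(0)}=-1/(\lambda(\nu-1)x^{\nu-1})$. The same defect recurs for $\tau$: even after the $\log\omega$ renormalization the increments of $\tau^{(0)}\circ f^n$ retain errors of size $\0\left(1/(n(\log n)^{1/\nu})\right)$, which are still not summable. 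The missing idea is the paper's correction scheme: one must replace $v$ by $w=v\left(1+\sum_{j=0}^{k-1}\phi_j(u)v^{-j/(k-1)}\right)$, where the $\phi_j$ are produced as solutions of a triangular system of linear ODEs $\phi_j'+F_j\phi_j-G_j\equiv0$ on a sector in the $u$-plane, with the growth bounds $\phi_j=\0\left(u^{(j-1)/k}\right)$ of Proposition~\ref{existsdiffeqsol} (obtained by integration by parts and Cauchy estimates) guaranteeing that every non-summable term in $w_1-w-1$ is cancelled; a further correction $\alpha(u)=\sum_{l=1}^k(-1)^l\int\phi_0^l$ must then be inserted into $\tau$ for the same reason. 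Without some version of these ODE-defined corrections, both of your limits diverge and the conjugacy is never constructed; everything downstream (injectivity via Rouch\'e, invariance of the final domain) depends on having these convergent coordinates in hand.
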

This is similar to a theorem proved by Hakim \cite{H1} (see also \cite{W2,AR}), however the theorem does not apply here because the director of $f$ is zero (see \S 1).  Vivas \cite{V2} also proved a similar result in the case of a degenerate characteristic direction.  These results are generalizations of the one-dimensional Leau-Fatou flower theorem which applies to a holomorphic fixed point germ $f$ on $\C$ tangent to the identity \cite{CG,M}.  \\ 

Theorem A partially answers questions raised by Abate about the quadratic map ($1_{11}$) in \cite{A1}.  This will be discussed more in the next section.

\begin{thmB}
\label{thmB}
Suppose $f$ as in Theorem A is an automorphism of $\C^2$.  Then there exists a Fatou-Bieberbach domain $\Sigma\subset\C^2$ with the fixed point in its boundary, that is invariant under $f$, and on which $f$ is conjugate to translation. 
\end{thmB}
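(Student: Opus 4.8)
The plan is to promote the local picture of Theorem~A to a global one by exhausting the maximal basin and propagating the linearizing coordinate along the dynamics. Let $\Phi\colon\Omega\to V:=\Phi(\Omega)\subset\C^2$ be the biholomorphism furnished by Theorem~A, normalized so that $\Phi\circ f=T\circ\Phi$ with $T(\tau,\omega)=(\tau,\omega+1)$; in particular $T(V)\subset V$ since $f(\Omega)\subset\Omega$, and every orbit in $V$ escapes to $\re(\omega)\to+\infty$, which corresponds to the fixed point $p$ on $\partial\Omega$. Because $f$ is now a global automorphism, I set
\[
  \Sigma:=\bigcup_{n\ge 0}f^{-n}(\Omega),
\]
which is an increasing union (as $f(\Omega)\subset\Omega$ gives $\Omega\subset f^{-1}(\Omega)$) of connected open sets, hence a connected open set, totally invariant with $f(\Sigma)=\Sigma=f^{-1}(\Sigma)$.

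First I would extend the conjugacy to all of $\Sigma$. On $f^{-n}(\Omega)$ define $\widehat\Phi:=T^{-n}\circ\Phi\circ f^{\,n}$. The functional equation $\Phi\circ f^{k}=T^{k}\circ\Phi$ on $\Omega$ shows these definitions agree on the overlaps $f^{-m}(\Omega)\subset f^{-n}(\Omega)$ for $m\le n$, so they glue to a single holomorphic map $\widehat\Phi\colon\Sigma\to\C^2$ extending $\Phi$ and satisfying $\widehat\Phi\circ f=T\circ\widehat\Phi$. The decisive use of the automorphism hypothesis enters in injectivity: if $\widehat\Phi(z)=\widehat\Phi(z')$, pick $n$ with $z,z'\in f^{-n}(\Omega)$; then $\Phi(f^{n}z)=\Phi(f^{n}z')$, so $f^{n}z=f^{n}z'$ since $\Phi$ is injective on $\Omega$, whence $z=z'$ because $f^{n}$ is a global bijection. (For a mere germ this last step fails, which is exactly why Theorem~A by itself does not produce a Fatou-Bieberbach domain.) Thus $\widehat\Phi$ is a biholomorphism of $\Sigma$ onto
\[
  W:=\widehat\Phi(\Sigma)=\bigcup_{n\ge 0}T^{-n}(V),
\]
the second equality holding because $f^{n}\bigl(f^{-n}(\Omega)\bigr)=\Omega$ for the automorphism $f$.

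The main obstacle is to identify $W$, and this is where the fine structure of $V$ coming out of the proof of Theorem~A must be invoked. The key is that $T^{-n}$ acts only in the $\omega$-variable, by $\omega\mapsto\omega-n$, so the backward iterates enlarge $W$ only in the $\omega$-direction while the transverse coordinate $\tau$ keeps exactly the range it already has in $V$. Concretely, I would read off from the construction in Theorem~A that $V$ contains a region of the form $\{(\tau,\omega):\tau\in\C,\ \re(\omega)>c(\tau)\}$ with the transverse coordinate ranging over all of $\C$; then for each fixed $\tau$ the sets $T^{-n}(V)$ sweep out every $\omega\in\C$, giving $W=\C^2$. The delicate point is precisely that the linearizing coordinate covers the \emph{full} transverse plane: if it covered only a disc one would obtain a product such as $\mathbb{D}\times\C$, which carries bounded nonconstant holomorphic functions and hence is \emph{not} biholomorphic to $\C^2$. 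So the argument hinges on extracting the exact image $V$ from the proof of Theorem~A, rather than on any soft property of the basin.

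Granting $W=\C^2$, the map $\widehat\Phi\colon\Sigma\to\C^2$ is a biholomorphism conjugating $f$ to $T$. It remains only to see that $\Sigma$ is a \emph{proper} subdomain: since $p=f^{n}(p)\notin\Omega$ for every $n$ (as $p\in\partial\Omega$ and $\Omega$ is open), we have $p\notin\Sigma$, so $\Sigma\subsetneq\C^2$ with the fixed point on its boundary. Hence $\Sigma$ is an $f$-invariant domain biholomorphic to $\C^2$ on which $f$ is conjugate to the translation $(\tau,\omega)\mapsto(\tau,\omega+1)$, that is, a Fatou-Bieberbach domain, which is the assertion of Theorem~B.
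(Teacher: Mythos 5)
Your overall strategy coincides with the paper's: take $\Sigma=\bigcup_{n\ge0}f^{-n}(\Omega)$, glue the linearizing map via $T^{-n}\circ\Phi\circ f^{n}$, obtain injectivity of the glued map from injectivity of $\Phi$ on $\Omega$ together with the global bijectivity of $f$, and obtain properness from $p=f^{n}(p)\notin\Omega$ for all $n$. All of these soft steps are correct and are exactly what the paper does (the paper packages the last identification as a locally trivial fibration over $\C$ with fiber $\C$, but your direct injective-plus-surjective formulation is equivalent).

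The genuine gap is the step $W=\C^2$, which you support only by asserting that $V$ contains $\{(\tau,\omega):\tau\in\C,\ \re(\omega)>c(\tau)\}$, to be ``read off from the construction.'' This is precisely where Theorem B requires work beyond Theorem A, and the set you posit is not what the construction yields. The image of the linearizing map is
$$\Otwh=\left\{(\tau-\log\omega,\omega)\ :\ \re(\tau)>R,\ |\tau|<\delta|\omega|^{\frac{k}{(k-1)(k+1)}},\ |\Arg(\tau)|<\theta,\ |\Arg(\omega)|<\tfrac{k-1}{k}\theta\right\},$$
so at a fixed $\omega$-level the transverse coordinate is confined to a \emph{bounded} truncated sector of radius roughly $\delta|\omega|^{k/(k^2-1)}$ shifted by $-\log\omega$, and the $\omega$-section at a fixed transverse value lies inside the sector $|\Arg(\omega)|<\frac{k-1}{k}\theta$, which contains no half-plane $\{\re(\omega)>c\}$; so the inclusion you state is literally false, and moreover nothing in the statement of Theorem A forces the transverse coordinate to range over all of $\C$ (if it ranged only over a disc one would get $\mathbb{D}\times\C$, as you yourself note). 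The content of the paper's surjectivity proof is exactly the quantitative comparison you omit: the $\tau$-slice of the image over level $w+n$ is a truncated sector of radius $(\delta|w+n|)^{k/((k-1)(k+1))}$ translated by $-\log(w+n)$, and since the power grows faster than the logarithm these slices exhaust $\C$ as $n\to\infty$. Your argument can be repaired along your own lines --- one checks by the same power-versus-logarithm estimate that for every $\hat\tau\in\C$ the point $(\hat\tau,\omega)$ lies in $\Otwh$ once $|\omega|$ is large within its sector, and the backward translates of that sector cover $\C$ --- but as written the decisive inclusion is assumed rather than proved, and in a form that does not hold.
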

We know that such automorphisms exist by results proven in \cite{BF} and \cite{W1}.  Recall that a Fatou-Bieberbach domain is a proper subdomain of $\C^n$ that is biholomorphically equivalent to $\C^n$. \\

%% Outline of strategy for paper:
The paper is organized in the following way.  In \S 1, we introduce the main definitions that will be used in this paper and show how to conjugate $f$ from Theorem A to a suitable normal form using a linear change of coordinates.  In \S 2, we perform a coordinate change moving the fixed point from the origin to infinity and we find an invariant region.  In \S 3, we perform another more complicated coordinate change so that $f$ acts on the second coordinate by translation.  The technique we employ to find the coordinate change is similar to that used in the degenerate case studied in \cite{V1}, but is more involved because we solve a system of (instead of just one) differential equations.  In \S 4, we perform a final coordinate change so that $f$ acts as the identity on the first coordinate and translation on the second.  In each section we find invariant domains so that in \S 4 we have finished showing Theorem A.  In \S 5, we assume that $f$ is an automorphism and extend our domain from \S 4 to one that is biholomorphic to $\C^2$, concluding with Theorem B. 

\subsection*{Acknowledgements}
The author would like to thank Mattias Jonsson for his guidance in choosing and studying this problem.  The author was supported in part by the NSF.

\section{Preliminaries}
\label{prelim}
Denote by $\End(\C^n,O)$ the set of holomorphic germs of self-maps of $\C^n$ that fix the origin.  Every $f\in\End(\C^n,O)$ can be written in the form:
$$f(x_1,\ldots,x_n)=P_1(x_1,\ldots,x_n)+P_2(x_1,\ldots,x_n)+\cdots,$$
where $P_j=(P_j^1,\ldots,P_j^n)$ and each $P_j^l$ is a homogeneous polynomial of degree $j$.  
\begin{Def}A germ $f\in\End(\C^n,O)$ is \textit{tangent to the identity} if $P_1=\Id$.  If $P_k\not\equiv 0$ and $P_j\equiv 0$  for $1<j<k$, then $k$ is the \textit{order} of $f$. \end{Def}
\begin{Def}Let $f\in\End(\C^n,O)$ be tangent to the identity of order $k$.  A \textit{characteristic direction} for $f$ is the projection in $\p^{n-1}$ of any $v\in\C^n\setminus\{O\}$ such that $P_k(v)=\lambda v$ for some $\lambda\in\C$; the characteristic direction is \textit{degenerate} if $\lambda=0$ and \textit{non-degenerate} if $\lambda\neq 0$.\end{Def}
\begin{Def}Given $f\in\End(\C^n,O)$ tangent to the identity with a non-degenerate characteristic direction $[v]\in\p^{n-1}$, the eigenvalues of the linear operator $D(P_k)_{[v]}-\Id:T_{[v]}\p^{n-1}\to T_{[v]}\p^{n-1}$ are the \textit{directors} of $[v]$.\end{Def}

Let $f\in\End(\C^n,O)$ be tangent to the identity with a non-degenerate characteristic direction all of whose directors have strictly positive real part.  Hakim proved in \cite[Theorem~1.8]{H1} that there is a subdomain of $\C^n$ in which every point is attracted to the origin along the given characteristic direction and $f$ is conjugate to a translation on that domain.  Conversely, if $g\in\End(\C^n,O)$ is tangent to the identity and has an attracting domain around a non-degenerate characteristic direction where all orbits converge to the origin along that direction, then all of the directors have non-negative real part \cite[Corollary~8.11]{AR}.  If, in addition to Hakim's previous constraints, $f$ is a biholomorphism of $\C^n$, then there is a Fatou-Bieberbach domain of $\C^n$ (i.e. the domain is biholomorphic to $\C^n$) in which every point is attracted to the origin along the given characteristic direction and $f$ is conjugate to a translation on that domain \cite[Theorem~1.10]{H1}.  Before Hakim proved this result, Weickert in \cite[Theorem 1]{W2} showed the existence of biholomorphisms tangent to the identity with such an invariant domain in dimension 2.  \\

Vivas showed similar results \cite[Theorem~1\mbox{ and }2]{V2} for maps with specific types of \emph{degenerate} characteristic direction in dimension 2.   In particular, let $f\in\End(\C^2,O)$ be tangent to the identity with a degenerate characteristic direction that satisfies specific properties (for instance it is irregular\footnote{After completing this paper, the author became aware of an updated version of Vivas's paper~\cite{V2}, in which the degenerate assumption from~\cite[Theorem 1]{ V2} was removed.  In Theorem A and B, we assume that $f$ has only one characteristic direction and that this direction is non-degenerate.  From this assumption, we are able to write $f$ in the form~\eqref{fo}, where it becomes apparent that the direction must be irregular (see~\cite{V2} for definition), at this point~\cite[Theorem 1]{V2} may be used to complete the proofs of Theorem A and B.} 
 or it is Fuchsian and the map satisfies other constraints).  Then there is a domain that is attracted to the origin along that characteristic direction.  If, in addition, $f$ is a biholomorphism of $\C^2$, then $f$ has a domain that is attracted to the origin along that direction which is biholomorphic to $\C^2$.  On the other hand,  Stens\o nes and Vivas \cite{SV} showed a negative result: for $n\geq 3$ there exists a biholomorphism of $\C^n$ tangent to the identity whose basin of attraction is biholomorphic to $(\C\setminus\{0\})^{n-2}\times\C^2$. \\

In order to use Theorem A to help show Theorem B, we will use the following theorem due independently to Weickert \cite[Theorem~2.1.1]{W1} and Buzzard-Forstneric \cite[Theorem~1.1]{BF}:
\begin{thm}\label{WThm2.1.1}
Let $P=(P_1,\ldots,P_n), n\geq 2$, be a holomorphic polynomial self-map of $\C^n$ with $P'(0)$ invertible.  Let $d\geq\max_i\{\deg(P_i)\}$.  Then there exists $\psi:\C^n\to\C^n$, a biholomorphism, such that $|\psi(z)-P(z)|=\lo\left(|z|^d\right)$ near the origin.\end{thm}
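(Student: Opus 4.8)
The plan is to realize $\psi$ as a finite composition of elementary polynomial automorphisms of $\C^n$ --- shears and overshears --- whose $d$-jet at the origin agrees with that of $P$; this is the jet-approximation core of Andersén--Lempert theory. First I would normalize the linear part. Since $A:=P'(0)$ lies in $GL_n(\C)$, it is itself an automorphism of $\C^n$, and if some automorphism $\phi$ satisfies $|\phi(z)-A^{-1}P(z)|=\lo(|z|^d)$ then $A\circ\phi$ does the job for $P$. Hence I may assume $P'(0)=\Id$, so that $P(z)=z+\0(|z|^2)$.

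The second step is an induction on the order of agreement. Suppose an automorphism $\Psi$ has already been produced with $\Psi^{-1}\circ P(z)=z+\0(|z|^{m+1})$ for some $1\le m\le d-1$ (the base case $m=1$ being $\Psi=\Id$). Writing $\Psi^{-1}\circ P(z)=z+S(z)+\0(|z|^{m+2})$ with $S$ homogeneous of degree $m+1$, it suffices to find an automorphism $\Phi$ with $\Phi(z)=z+S(z)+\0(|z|^{m+2})$: then $\Phi^{-1}$ has leading perturbation $-S$, so $\Phi^{-1}\circ\Psi^{-1}\circ P(z)=z+\0(|z|^{m+2})$ and I replace $\Psi$ by $\Psi\circ\Phi$. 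Iterating up to $m+1=d+1$ produces $\Psi$ with $\Psi^{-1}\circ P=\Id+\0(|z|^{d+1})$; since $D\Psi$ is bounded near the origin this gives $P=\Psi+\lo(|z|^d)$, and $\psi=\Psi$ proves the theorem.

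Everything therefore reduces to the following building block: every homogeneous degree-$k$ polynomial map ($k\ge2$) of the form $z\mapsto z+R(z)$ is the $k$-jet of an automorphism. Because two maps that are $\Id+(\text{order }k)$ compose to $\Id$ plus the \emph{sum} of their degree-$k$ parts plus higher order, I may expand $R$ into monomials and treat each monomial $c\,z^\alpha e_j$, $|\alpha|=k$, separately. If $\alpha_j=0$ then $z\mapsto z+c\,z^\alpha e_j$ is literally a shear, hence an automorphism; if $\alpha_j=1$ an overshear $z_j\mapsto z_j e^{g}$ with $g=c\prod_{l\ne j}z_l^{\alpha_l}$ supplies the monomial to leading order.

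The main obstacle is the remaining monomials, those with $\alpha_j\ge2$, in which the target coordinate $z_j$ appears with high multiplicity: no shear or overshear in the $j$-th direction can depend on $z_j$, so these cannot be produced directly. I would resolve this using Lie brackets of the complete shear vector fields $X_{j,h}=h(z_1,\dots,\widehat{z_j},\dots,z_n)\,\partial_{z_j}$, whose time-one flows are shears. For $h$ independent of $z_j$ and $g$ independent of $z_l$,
\[
 [\,h\,\partial_{z_j},\,g\,\partial_{z_l}\,]=h\,(\partial_{z_j}g)\,\partial_{z_l}-g\,(\partial_{z_l}h)\,\partial_{z_j},
\]
and the first term has a $\partial_{z_l}$-coefficient $h\,\partial_{z_j}g$ that \emph{does} depend on $z_l$ through $h$; iterating such brackets generates every homogeneous field $z^\alpha\partial_{z_j}$, including those with $\alpha_j\ge2$. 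The standard commutator-flow formula, which expresses the group commutator of two shear flows as the flow of their Lie bracket up to higher-degree terms, then realizes each such field to leading order as a composition of shears, completing the building block and hence the proof.
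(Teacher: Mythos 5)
The paper does not actually prove this statement: it is imported from Weickert \cite{W1} and Buzzard--Forstneri\v{c} \cite{BF}, so there is no internal proof to compare against. Your overall strategy --- normalize the linear part, then inductively correct one homogeneous degree at a time by composing with an automorphism whose jet realizes the required homogeneous perturbation --- is the standard route those references take, and the reduction steps (the composition rule for maps of the form $\Id+(\text{order }k)$, the monomials with $\alpha_j=0$ as shears, the monomials with $\alpha_j=1$ as overshears) are all correct.

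The gap is in the building block for the monomials with $\alpha_j\ge 2$. You propose to produce these from iterated Lie brackets of the \emph{shear} fields $h(\widehat{z_j})\,\partial_{z_j}$ alone. But every shear field is divergence-free, and $\mathrm{div}[X,Y]=X(\mathrm{div}Y)-Y(\mathrm{div}X)$, so every iterated bracket of shear fields is again divergence-free; likewise every finite composition of shears has Jacobian determinant $\equiv 1$, so the leading homogeneous perturbation of any commutator of shear flows is divergence-free. On the other hand, the pieces you realize directly have divergence of the special split form $\sum_j h_j(z_1,\dots,\widehat{z_j},\dots,z_n)$: shears contribute $0$ and the overshear jet $z_j g(\widehat{z_j})e_j$ contributes $g(\widehat{z_j})$. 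Hence every leading perturbation your construction can produce has divergence splitting as a sum of functions each missing one variable, whereas a general homogeneous $S$ does not: already for $n=2$, $k=3$, the single bad monomial $S=\tfrac12 z_1z_2^2\,e_2$ has $\mathrm{div}\,S=z_1z_2$, which is not of the form $h_1(z_2)+h_2(z_1)$, so it is unreachable by your scheme. The missing idea is that the complete \emph{overshear fields} $z_j g(\widehat{z_j})\,\partial_{z_j}$ must also be admitted as bracket generators --- for instance $\mathrm{div}\,[\,z_jg\,\partial_{z_j},\,h\,\partial_{z_l}\,]=-h\,\partial_{z_l}g$ realizes arbitrary mixed divergences --- and this is precisely the content of the Anders\'en--Lempert theorem. (Two smaller points you would also need to handle: the bracket $[h\,\partial_{z_j},g\,\partial_{z_l}]$ has a second term that may itself be a bad monomial, so a triangular induction, say on the exponent $\alpha_j$, is needed; and for monomials such as $z_j^k e_j$ one of the two generating fields is forced to be linear, in which case the group commutator of the time-one flows differs from $\Id+[X,Y]$ by further terms of the \emph{same} degree $k$ rather than higher degree, so the commutator-flow formula must be applied with extra care there.)
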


For the rest of this paper we will restrict to dimension $2$.  Throughout the paper we will use the notation $\pi_j$ to denote projection onto the $j$th coordinate.

\begin{lem}
\label{linear}
Let $f\in\End(\C^2,O)$ be tangent to the identity of order $k$ with only one characteristic direction at the origin and this direction is non-degenerate.  Then $f$ is linearly conjugate to the map
\begin{equation}\label{fo}
\fo(x,y)=\left(x\left(1+xyR(x,y)+y^{k-1}\right)+P(x,y),y\left(1+xyR(x,y)+y^{k-1}\right)+x^k+Q(x,y)\right),
\end{equation}
where $P(x,y), Q(x,y)$ are convergent power series vanishing to order at least $k+1$ at the origin and $R(x,y)$ is a homogeneous polynomial of degree $k-3$ with the constraint $R\equiv 0$ if $k\leq 2$. \end{lem}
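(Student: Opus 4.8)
The plan is to read off the unique characteristic direction from the leading homogeneous term of $f$, exploit both uniqueness and non-degeneracy to constrain that term, and then normalize the surviving free coefficients by the linear conjugacies that fix the direction. First I would write $f=\Id+P_k+P_{k+1}+\cdots$ with $P_k=(P_k^1,P_k^2)\neq(0,0)$ homogeneous of degree $k$ (so $k\ge 2$), and record that the characteristic directions are exactly the zeros in $\p^1$ of the homogeneous degree-$(k+1)$ form
\[
\chi(x,y):=y\,P_k^1(x,y)-x\,P_k^2(x,y),
\]
a direction $[v]$ being degenerate precisely when $P_k(v)=(0,0)$. Since $f$ has a single characteristic direction, $\chi\not\equiv 0$ (otherwise every direction would be characteristic) and all $k+1$ of its roots coincide, so $\chi=c\,\ell^{\,k+1}$ for a constant $c\neq 0$ and a linear form $\ell$. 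A linear change of coordinates moves this direction to $[0:1]$, so that after conjugation $\chi(x,y)=-e\,x^{k+1}$ with $e\neq 0$; this is where the linear conjugation of the statement is first used.

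Next I would extract the shape of $P_k$. Because $[0:1]$ is a root of $\chi$ we get $P_k^1(0,1)=0$, hence $x\mid P_k^1$; writing $P_k^1=xH$ with $H$ homogeneous of degree $k-1$ and substituting into $\chi=-e\,x^{k+1}$ gives $x\,(yH-P_k^2)=-e\,x^{k+1}$, so cancelling $x$ in the polynomial ring yields
\[
P_k^1=xH,\qquad P_k^2=yH+e\,x^k.
\]
Non-degeneracy of $[0:1]$ says $P_k(0,1)=(0,H(0,1))\neq(0,0)$, i.e.\ the coefficient $a_0$ of $y^{k-1}$ in $H$ is nonzero. It then remains to normalize three scalars: $e\mapsto 1$, $a_0\mapsto 1$, and the coefficient of $x^{k-1}$ in $H$ to $0$.

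For this last step I would use the stabilizer of $[0:1]$ in $\mathrm{GL}_2(\C)$, namely the lower-triangular maps $(x,y)\mapsto(ax,\,cx+dy)$ with $ad\neq 0$. A diagonal map $(x,y)\mapsto(ax,dy)$ sends $H(x,y)\mapsto H(ax,dy)$ and $e\mapsto e\,a^k/d$, so choosing $d^{\,k-1}=1/a_0$ and $a^{k}=d/e$ achieves $a_0=1$ and $e=1$ at once. A subsequent shear $(x,y)\mapsto(x,cx+y)$ fixes both $e$ and $a_0$ while replacing the $x^{k-1}$-coefficient of $H$ by a polynomial in $c$ of degree $k-1$ with leading coefficient $a_0\neq 0$; taking $c$ to be a root of that polynomial kills the $x^{k-1}$-term. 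After these conjugations $H=y^{k-1}+(\text{monomials } x^a y^{k-1-a},\ 1\le a\le k-2)$, which factors as $H=y^{k-1}+xy\,R$ with $R$ homogeneous of degree $k-3$ (and $R\equiv 0$ when $k\le 2$, there being no intermediate monomials). Substituting back gives $P_k^1=x(xyR+y^{k-1})$ and $P_k^2=y(xyR+y^{k-1})+x^k$, and collecting all terms of order $\ge k+1$ into convergent series $P,Q$ produces exactly \eqref{fo}.

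The main obstacle is the bookkeeping in this final normalization: one must verify that the three scalar normalizations can be realized simultaneously inside the three-parameter stabilizer of the fixed direction, and this is precisely where the non-degeneracy hypothesis $a_0\neq 0$ is indispensable, since it guarantees both that the diagonal system $d^{\,k-1}=1/a_0$, $a^{k}=d/e$ is solvable and that the shear polynomial in $c$ genuinely has degree $k-1$, hence a root in $\C$.
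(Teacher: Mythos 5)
Your proof is correct and follows essentially the same route as the paper: move the unique characteristic direction to $[0:1]$, deduce $P_k=(xH,\,yH+e\,x^k)$ from uniqueness of that direction, and normalize the three remaining scalars using the stabilizer of $[0:1]$ in $\mathrm{GL}_2(\C)$. Your packaging of the uniqueness step via the form $\chi=yP_k^1-xP_k^2$ having a single $(k+1)$-fold root, and your splitting of the normalization into a diagonal map followed by a shear, are only cosmetic variations on the paper's direct coefficient comparison and single lower-triangular conjugation; your explicit remark that the shear equation is a degree-$(k-1)$ polynomial in $c$ with leading coefficient $a_0\neq 0$ (hence solvable) makes precise a point the paper leaves implicit.
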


\begin{proof}
We can write $f(x,y)$ as a sum of its homogenous polynomials,
$$f(x,y)=(x,y)+\sum_{j=k}^\infty P_j(x,y),$$
where $P_j(x,y)$ are homogeneous polynomials of degree $j$ and $P_k\not\equiv 0$.  We assume that $[0:1]$ is the characteristic direction of $f$ since via a linear conjugation of $f$ we can move the characteristic direction of $f$ to $[0:1]$ without changing the degree of any of the $P_j(x,y)$.  We can write the $k$-th degree polynomial as:
$$P_k(x,y)=\left(\sum_{j=0}^{k} a_j x^{k-j}y^j,\sum_{j=0}^{k} b_j x^{k-j}y^j\right),$$
where $a_j,b_j\in\C$.  Since $[0:1]$ is the only characteristic direction of $f$, $P_k[x:y]\neq[x:y]$ for all $x\neq 0$.  This restricts the possible values of $\{a_j,b_j\}$: 
$$P_k(0,1)=(a_k,b_k)\mbox{ and }P_k(1,0)=(a_0,b_0), \mbox{ therefore }a_k=0, b_k\neq 0\mbox{ and } b_0\neq 0,$$
$$P_k(x,1)=\left(\sum_{j=0}^{k-1} a_j x^{k-j},\sum_{j=0}^{k} b_j x^{k-j}\right)
	=\left(x\sum_{j=0}^{k-1} a_j x^{k-1-j},\sum_{j=0}^{k} b_j x^{k-j}\right) 
	\neq \lambda(x,1)$$
for any $\lambda\in\C$.  Since
$$P_k[x:1]=[x:1]\Leftrightarrow \sum_{j=0}^{k-1} a_j x^{k-1-j}=\sum_{j=0}^{k} b_j x^{k-j} \Leftrightarrow -b_0x^k+\sum_{j=1}^{k} (a_{j-1}-b_j) x^{k-j}=0$$
and the first condition is true only when $x=0$, the last condition must only be true when $x=0$.  Thus $a_{j-1}=b_j, \forall 1\leq j\leq k$.  
We can now re-write $P_k(x,y)$ as:
$$P_k(x,y)=\left(x\sum_{j=0}^{k-1} a_j x^{k-1-j}y^j,y\sum_{j=0}^{k-1} a_{j} x^{k-1-j}y^j+b_0x^k\right):=(xS(x,y),yS(x,y)+b_0x^k).$$
Now that we have a more explicit form for $P_k$, we want to simplify it further using linear conjugation.  Let $l$ be a linear map that fixes $[0:1]$.  Then we can write $l$ as:
\begin{equation}\label{l}
l(x,y):=(ax,cx+dy)\mbox{ and }l^{-1}(x,y)=\frac{1}{ad}(dx,-cx+ay),\mbox{ where }ad\neq 0.
\end{equation}
\begin{align*}
l^{-1}\circ P_k\circ l(x,y)
	&=l^{-1}(axS(ax,cx+dy),(cx+dy)S(ax,cx+dy)+b_0a^k x^k) \\
	&=\left(xS(ax,cx+dy),yS(ax,cx+dy)+\frac{b_0 a^k}{d}x^k\right)\\
S(ax,cx+dy)
	&=\sum_{j=0}^{k-1} a_j a^{k-1-j} x^{k-1-j}(cx+dy)^j \\
	&=x^{k-1}\left(\sum_{j=0}^{k-1}a_j a^{k-1-j}c^j\right)+xy(\cdots)+y^{k-1}(a_{k-1}d^{k-1})
\end{align*}
We can choose $a,c,d$ so that (1) $\frac{b_0 a^k}{d}=1$, (2) $a_{k-1}d^{k-1}=1$, and (3) $\sum_{j=0}^{k-1}a_j a^{k-1-j}c^j=0$.  Therefore $P_k(x,y)$ is linearly conjugate to:
$$\left(x\left(xyR(x,y)+y^{k-1}\right),y\left(xyR(x,y)+y^{k-1}\right)+x^k\right),$$
where $R\equiv 0$ if $k\leq 2$, otherwise $R(x,y)=\frac{1}{xy}\left(S(ax,cx+dy)-y^{k-1}\right)$ is a homogeneous polynomial of degree $k-3$.  Let
$$(P(x,y),Q(x,y)):=l^{-1}\circ\sum_{j=k+1}^\infty P_j\circ l(x,y).$$
Since $(P_j)$ are convergent power series in a neighborhood of the origin, so are $P,Q$.   \end{proof}

Abate in \cite{A1} studied quadratic maps tangent to the identity up to holomorphic conjugacy.  He showed that for quadratic self maps of $\C^2$ tangent to the identity, holomorphic conjugacy was equivalent to linear conjugacy and used this along with the number of characteristic directions of the maps to classify all such maps.  In addition, Ueda \cite{U,W2} and Rivi in \cite{R} also classified such maps.  In this paper, if we assume that the map $f$ is quadratic with no terms of higher degree, then $f_0$ is the same map as Abate called ($1_{11}$) in \cite{A1}, namely
$$f_0(x,y)=\left(x(1+y),y(1+y)+x^2\right)=(x+xy,y+y^2+x^2).$$
 
The next lemma shows that Hakim's result does not apply to $f$.

\begin{lem}The real part of the director of $f$ at $[0:1]$ is zero.\end{lem}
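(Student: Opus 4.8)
The plan is to invoke Lemma~\ref{linear} to replace $f$ by the explicit normal form $\fo$, and then read the director off its leading homogeneous part. Directors are unchanged by linear conjugacy: a linear change of coordinates induces a holomorphic automorphism of $\p^1$ that conjugates the projectivized map $[P_k]\colon\p^1\dashrightarrow\p^1$ and moves the fixed point accordingly, hence preserves the eigenvalues of the linearization of $[P_k]$ at that fixed point. So it suffices to compute the director for $\fo$, whose degree-$k$ part is
\[
P_k(x,y)=\left(x\bigl(xyR(x,y)+y^{k-1}\bigr),\; y\bigl(xyR(x,y)+y^{k-1}\bigr)+x^k\right)=(xS,\;yS+x^k),\qquad S:=xyR+y^{k-1}.
\]
Note that only $P_k$ matters, since the director depends on the leading term alone and not on the higher-order remainders $P(x,y),Q(x,y)$.

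First I would set $v=(0,1)$ and record that $P_k(0,1)=(0,1)$, so that $[0:1]$ is a non-degenerate characteristic direction with eigenvalue $\lambda=1$, consistent with the hypotheses. Next I would pass to the affine chart $u=x/y$ on $\p^1$ centered at $[0:1]$, in which the projectivized map is $g(u)=P_k^1(u,1)/P_k^2(u,1)$. Because $\dim\p^1=1$, the operator $D(P_k)_{[0:1]}-\Id$ on $T_{[0:1]}\p^1$ is multiplication by the single scalar $g'(0)-1$, which is therefore the director.

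The key computation is that the common factor $S$ cancels in the quotient. Writing $\rho(u):=R(u,1)$, one has $P_k^1(u,1)=u(u\rho(u)+1)$ and $P_k^2(u,1)=(u\rho(u)+1)+u^k$, so that
\[
g(u)=u\cdot\frac{u\rho(u)+1}{u\rho(u)+1+u^k}=u-u^{k+1}+\0(u^{k+2}).
\]
Equivalently, the tangential part $P_k^1(u,1)-uP_k^2(u,1)=-u^{k+1}$ vanishes to order $k+1\geq2$ at $u=0$. In either form $g'(0)=1$, so the director equals $g'(0)-1=0$; in particular its real part is zero.

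The computation itself is short, so the only point to get right is the bookkeeping that makes the cancellation transparent: both components of $P_k$ share the factor $S=xyR+y^{k-1}$, so the $R$- and $y^{k-1}$-dependent terms drop out of the projective quotient and only the ``defect'' monomial $x^k$ survives, entering at order $u^{k+1}$ and hence not affecting the first derivative $g'(0)$. I would stress that this vanishing is exactly the structural feature forced by the normal form of Lemma~\ref{linear}, and that since $k-1>0$ the conclusion is robust: any variant of the definition differing from $D(P_k)_{[0:1]}-\Id$ by the positive real factor $k-1$ gives the same statement about the real part. This is precisely why Hakim's theorem, which requires strictly positive real part, does not apply to $f$.
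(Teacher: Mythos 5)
Your proposal is correct and follows essentially the same route as the paper: pass to the affine chart at $[0:1]$, observe that the common factor $S$ cancels in the projectivized map $g$ so that the $x^k$ term only enters at order $k+1$, and conclude that the director $g'(0)-1$ vanishes. The only cosmetic difference is that you compute with the fully normalized $P_k$ from Lemma~\ref{linear}, whereas the paper differentiates the quotient $\frac{xS(x,1)}{S(x,1)+b_0x^k}$ directly in the pre-normalized form; both calculations are equivalent.
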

\begin{proof}
Let $U:=\left\{[x_0:x_1]\in\C\p^1 \ \big| \ x_1\neq 0 \right\}$ and define $\pi:U\to\C$ by $\pi([x_0:x_1])=\frac{x_0}{x_1}$. Define 
$$g(x):=\pi\circ \widehat{P_k} \circ\pi^{-1}(x)=\pi\left[xS(x,1):S(x,1)+b_0x^k\right]=\frac{xS(x,1)}{S(x,1)+b_0x^k},$$
where $x\in\C$ and $ \widehat{P_k}:\C\p^1\to\C\p^1$ by $[v]\mapsto[P_k(v)]$.  So for $x\in\C$, $D(\widehat{P_k})_{[x:1]}-\Id=g'(x)-\Id$.  Therefore the director of $f$ at $[0:1]$ is the value of $g'(0)-\Id$.  Since
$$g'(x)=\frac{S(x,1)^2+b_0x^kS(x,1)+b_0x^{k+1}S'(x,1)-kb_0x^kS(x,1)}{\left(S(x,1)+b_0x^k\right)^2},$$
$g'(0)-\Id=0$.  Therefore the director of $f$ at $[0:1]$ is zero.
\end{proof}

\section{Invariant Region}
We want to find a domain of attraction for the map $\fo$, which is equivalent to finding one for $f$.  For $xy\neq 0$, let
$$(u,v):=\psi_0(x,y):=\left(a\frac{y^k}{x^k},\frac{b}{y^{k-1}}\right),$$
where $a=-\frac{k-1}{k},b=-\frac{1}{k-1}$.  Define
\begin{equation}\label{Ouveq}
\Ouv_{R,\delta,\theta}:=\left\{ (u,v)\in\C^2 \ \bigg| \  \re(u)>R, |u|^{\frac{(k-1)(k+1)}{k}}<\delta|v|, |\Arg (u)|<\theta,|\Arg (v)|<\frac{k-1}{k}\theta\right\} 
\end{equation}
and
\begin{equation}\label{Ou}
\Ou_{R,\theta}:=\left\{u\in\C \ \bigg| \ \re(u)>R, |\Arg (u)|<\theta\right\}=\pi_1\left(\Ouv_{R,\delta,\theta}\right), 
\end{equation}
for any 
$$0<\theta<\frac{\pi}{4},\qquad 0<\delta\ll1,\qquad\mbox{and}\qquad R\gg0.$$
Fix $R,\delta,\theta$ satisfying the above conditions to define
$$\Ouv:=\Ouv_{R,\delta,\theta}\qquad\mbox{ and }\qquad \Ou:=\Ou_{R,\theta}.$$
Now we can define an inverse to $\psi_0$ restricted to the domain $\Ouv$:
$$(x,y):=\psi_0^{-1}(u,v)=\left(\left(\frac{a}{u}\right)^\frac{1}{k}\left(\frac{b}{v}\right)^\frac{1}{k-1},\left(\frac{b}{v}\right)^\frac{1}{k-1}\right),$$
where we choose the $\frac{1}{k},\frac{1}{k-1}$ roots that map 1 to 1.  Therefore $\psi_0:\psi_0^{-1}\left(\Ouv\right)\to\Ouv$ is a biholomorphism.  Note that $0\in\partial\left(\psi_0^{-1}\left(\Ouv\right)\right)$.

\begin{prop}\label{Shrink}
Given $R'<R$ and $\theta<\theta'<\frac{\pi}{4}$, $\exists\kappa>0$ such that if $u\in\Ou_{R,\theta}$, then $B\left(u,\kappa|u|\right)\subset\Ou_{R',\theta'}$.
Furthermore, given $\alpha\neq0$  and a holomorphic function $F$ on $\Ou_{R',\theta'}$ satisfying the bound $F=\0(u^\alpha)$, then $\forall n\in\N$, the $n$th derivative satisfies the bound $F^{(n)}=\0\left(u^{\alpha-n}\right)$ on $\Ou_{R,\theta}$.
\end{prop}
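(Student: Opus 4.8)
The plan is to split the proposition into its two independent assertions: the geometric inclusion in the first sentence, which I would prove by elementary plane geometry, and the derivative bound, which is a routine Cauchy-estimate consequence of that inclusion. The only delicate point is that the two defining constraints of $\Ou_{R,\theta}$ interact: the half-plane condition $\re(u)>R$ and the sector condition $|\Arg(u)|<\theta$ must be controlled simultaneously, and a ball of radius $\kappa|u|$ can be large when $|u|$ is large. The remedy is to use the sector to bound $|u|$ in terms of $\re(u)$, and this coupling is really the only obstacle; once it is handled both halves are short.

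For the inclusion, fix $u\in\Ou_{R,\theta}$ and let $w\in B(u,\kappa|u|)$. I would treat the angular condition first. By rotational invariance I may assume $u>0$, so the disk $B(u,\kappa|u|)$ is viewed from the origin with center at distance $|u|$ and radius $\kappa|u|$; the maximal deviation of $\Arg(w)$ from $\Arg(u)$ is attained along a tangent line to the bounding circle and equals $\arcsin\kappa$ (automatically defined since $\kappa<1$). Hence $|\Arg(w)|<\theta+\arcsin\kappa$, and choosing $\kappa<\sin(\theta'-\theta)$ forces $|\Arg(w)|<\theta'$. For the half-plane condition I would write $\re(w)>\re(u)-\kappa|u|$ and then use $\re(u)>|u|\cos\theta$ (valid because $|\Arg(u)|<\theta<\tfrac{\pi}{2}$) to obtain $|u|<\re(u)/\cos\theta$. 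This gives $\re(w)>\re(u)\bigl(1-\kappa/\cos\theta\bigr)>R\bigl(1-\kappa/\cos\theta\bigr)$ once $\kappa<\cos\theta$. Since $R'<R$, shrinking $\kappa$ further so that $R\bigl(1-\kappa/\cos\theta\bigr)\ge R'$ yields $\re(w)>R'$. Taking $\kappa$ below the minimum of these finitely many thresholds establishes $B(u,\kappa|u|)\subset\Ou_{R',\theta'}$.

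For the derivative bound I would apply the Cauchy integral formula on a circle that the inclusion keeps safely inside the larger domain. For $u\in\Ou_{R,\theta}$ the closed disk $\{\,|w-u|\le \kappa|u|/2\,\}$ lies in $B(u,\kappa|u|)\subset\Ou_{R',\theta'}$, where the hypothesis $F=\0(u^\alpha)$ applies. On that disk $|w|$ is comparable to $|u|$ (it lies between $(1-\kappa)|u|$ and $(1+\kappa)|u|$), so $|F(w)|\le C'|u|^{\re\alpha}$ after absorbing the bounded factor $|w^\alpha|/|u|^{\re\alpha}$ into the constant; this factor is bounded because $\Arg w$ remains in a fixed sector, so the argument works for complex $\alpha$ as well as real. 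Cauchy's estimate with radius $\kappa|u|/2$ then gives $|F^{(n)}(u)|\le n!\,(2/\kappa)^n\,C'\,|u|^{\re\alpha-n}$, which is exactly $F^{(n)}=\0\bigl(u^{\alpha-n}\bigr)$ on $\Ou_{R,\theta}$ for each fixed $n$. I note that the hypothesis $\alpha\neq 0$ plays no role in this argument and is presumably only the case of interest in the sequel; the constant depends on $n$ through $n!\,(2/\kappa)^n$, which is harmless since $n$ is fixed and $\kappa$ depends only on $R,R',\theta,\theta'$.
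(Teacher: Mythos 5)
Your proposal is correct and follows essentially the same route as the paper: an elementary geometric estimate showing the disk $B(u,\kappa|u|)$ stays inside the larger sector-half-plane region (with the same threshold $\kappa\lesssim\min\{\sin(\theta'-\theta),\,c(1-R'/R)\}$, the paper using the crude bound $\re(u)>\tfrac12|u|$ where you use $\cos\theta$), followed by the Cauchy integral estimate on a circle of radius comparable to $\kappa|u|$. Your observations that $|w|\asymp|u|$ on the circle and that the hypothesis $\alpha\neq 0$ is never actually used are both consistent with the paper's argument, which performs the same comparison via the factors $(1\pm\kappa)^\alpha$.
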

\begin{proof}
Given $u\in\Ou_{R,\theta}$,
$$\re(u)-R'>\left(1-\frac{R'}{R}\right)\re(u)>\frac{1}{2}\left(1-\frac{R'}{R}\right)|u|$$
and
$$\sin\left(\theta'-\Arg(u)\right)|u|>\sin(\theta'-\theta)|u|.$$
Hence for any $0<\kappa\leq\min\left\{\frac{1}{2}\left(1-\frac{R'}{R}\right),\sin\left(\theta'-\theta\right)\right\}$, the disk $B\left(u,\kappa|u|\right)\subset \Ou_{R',\theta'}$.  On $\Ou_{R',\theta'}$, $F(\zeta)=\0(\zeta^\alpha)$ and $\exists C>0$ such that 
$|F(\zeta)|<C|\zeta|^\alpha\leq
\begin{cases}
	C(1+\kappa)^\alpha|u|^\alpha, & \mbox{if }\alpha\geq 0 \\
	C(1-\kappa)^\alpha|u|^\alpha, & \mbox{if }\alpha< 0 
\end{cases}$.  Therefore, $\forall n\in\N$,
$$\bigg|F^{(n)}(u)\bigg| \leq \frac{n!}{2\pi}\Bigg| \int_{|\zeta-u|=\kappa|u|}\frac{F(\zeta)}{(\zeta-u)^{n+1}}d\zeta\Bigg|
\leq\frac{n!}{(\kappa|u|)^n}\sup_{|\zeta-u|=\kappa|u|}|F(\zeta)|
=\0\left(u^{\alpha-n}\right),$$
where we used Cauchy estimates to get the first inequality.
\end{proof}

\begin{rem}\label{Ouv}
On several occasions, we will adjust $R,\delta,\theta$ to shrink the domain $\Ouv$ (or $\Ou$).  In particular, we will choose $R',\delta',\theta'$ that depend on $R,\delta,\theta$ so that by making $R$ large enough and $\delta,\theta $ small enough the domain $\Ouv_{R',\delta',\theta'}$ (or $\Ou_{R',\theta'}$) satisfies all of the properties that had been shown for $\Ouv$ (respectively, $\Ou$) and $\Ouv_{R',\delta',\theta'}\supsetneq\Ouv$ (respectively, $\Ou_{R',\theta'}\supsetneq\Ou$).  We will use this and Proposition~\ref{Shrink} to find domains on which a holomorphic function is defined as well as similar subdomains on which we can bound the derivatives of that holomorphic function.\end{rem}

Let $\fa:=\psi_0\circ \fo\circ\psi_0^{-1}$.  For any $(u,v)\in\Ouv$, denote the $n$-th iterate of the map after the coordinate change by $\fa^n(u,v):=(u_n,v_n)$, where $\fa^0(u,v)=(u,v)$.  Later in this section we will prove the following results on invariance of $\Ouv$ and size of $(u_n,v_n)$.

\begin{lem}\label{Ouvinvariant}
$\Ouv$ is invariant under $\fa$.\end{lem}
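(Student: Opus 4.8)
\emph{Setting up the computation.} The plan is to compute $\fa=\psi_0\circ\fo\circ\psi_0^{-1}$ to leading order on $\Ouv$ and then to check that each of the four inequalities defining $\Ouv$ is preserved in one step. Write $(x,y)=\psi_0^{-1}(u,v)$, $(x_1,y_1)=\fo(x,y)$, and set $h(x,y):=1+xyR(x,y)+y^{k-1}$, so that $x_1=xh+P$ and $y_1=yh+x^k+Q$. I would substitute the relations $y^{k-1}=b/v$ and $x^k=ay^k/u$ coming from $\psi_0$, factor
$$x_1=x\bigl(1+\varepsilon+P/x\bigr),\qquad y_1=y\bigl(1+\varepsilon+(x^k+Q)/y\bigr),\qquad \varepsilon:=y^{k-1}+xyR,$$
and then expand
$$v_1=\frac{b}{y_1^{k-1}}=v\bigl(1+\varepsilon+(x^k+Q)/y\bigr)^{-(k-1)},\qquad u_1=a\frac{y_1^{k}}{x_1^{k}}=u\left(\frac{1+\varepsilon+(x^k+Q)/y}{\,1+\varepsilon+P/x\,}\right)^{\!k}.$$
The cancellation of the common factor $1+\varepsilon$ in the quotient defining $u_1$ is exactly what makes the expansion tractable.

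\emph{The leading form.} Performing the binomial expansions and using the arithmetic of the constants $a=-\tfrac{k-1}{k}$, $b=-\tfrac{1}{k-1}$ (so that $-(k-1)b=1$ and $kab=1$), the constant and first-order terms collapse to
$$v_1=v+1+E_v,\qquad u_1=u+\frac1v+E_u,$$
the ``$+1$'' coming from $v\cdot\bigl(-(k-1)y^{k-1}\bigr)=-(k-1)b=1$, and the ``$1/v$'' coming from $u\cdot k\,x^{k}/y=kab/v=1/v$. The remaining work is to bound the errors on $\Ouv$. Every contribution of $P$, $Q$, and the $xyR$ part of $\varepsilon$ must be shown subdominant: using the balance inequality $|u|^{(k-1)(k+1)/k}<\delta|v|$ one gets $|E_v|=\0(1/|u|)$ and $|E_u|=\0\bigl(\delta^{1/(k-1)}/|v|\bigr)$. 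The least favorable term is the contribution of $P/x$ to $E_u$: a monomial $x^ay^b$ with $a+b\ge k+1$ contributes to $ukP/x$ a quantity whose ratio to $1/v$ is, after inserting $x\sim u^{-1/k}v^{-1/(k-1)}$ and $y\sim v^{-1/(k-1)}$ and applying the balance inequality, only $\0\bigl(\delta^{1/(k-1)}\bigr)$; this is the reason $\delta$ must be taken small.

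\emph{Preserving the four inequalities.} With the form above, each condition is checked separately. For $\re(u_1)>R$: since $|\Arg(v)|<\tfrac{k-1}{k}\theta<\tfrac\pi2$ we have $\re(1/v)=\cos(\Arg v)/|v|>0$, which dominates $\re(E_u)$ once $\delta$ is small, so $\re(u_1)>\re(u)>R$. For the balance $|u_1|^{(k-1)(k+1)/k}<\delta|v_1|$: here $|u_1|\le|u|+\0(1/|v|)$ barely changes the left side, while $\re(v)>0$ forces $|v_1|\ge|v|+\tfrac12\cos\bigl(\tfrac{k-1}{k}\theta\bigr)$, so the right side grows by a definite amount $\sim\delta$; comparing, the left side increases by only $\0(\delta/R)$, which is smaller once $R$ is large. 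The two sector conditions $|\Arg(u_1)|<\theta$ and $|\Arg(v_1)|<\tfrac{k-1}{k}\theta$ use the principle that the argument of a sum of two vectors meeting at an angle less than $\pi$ lies between their two arguments. I would split each into an inner and an outer angular region: in the inner region the increment changes the argument by $\0(1/(|u||v|))$, resp.\ $\0(1/|u|)$, which is negligible; in the outer region the increment ($1/v$ for $u$, and $\approx 1$ for $v$) is \emph{angularly more central} than the point itself, precisely because the $v$-sector has the smaller half-angle $\tfrac{k-1}{k}\theta<\theta$, so adding it moves the argument strictly inward.

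\emph{Main obstacle.} The delicate part is the error analysis of the previous two paragraphs, not the final sector checks. Establishing $v_1=v+1+\0(1/|u|)$ and $u_1=u+1/v+\0(\delta^{1/(k-1)}/|v|)$ requires tracking, through the binomial expansions of $(1+\varepsilon+\cdots)^{-(k-1)}$ and of the $k$-th power of the quotient, that \emph{all} monomials of $P$, $Q$, and $R$ are dominated by the leading increments; the only term not gaining a power of $u$ is the $P/x$ contribution, which is controlled solely by a positive power of $\delta$. This is exactly why the domain must be pinched by the balance inequality with $\delta$ small, $R$ large, and $\theta$ small, and it is also the asymmetry between the opening angles $\theta$ and $\tfrac{k-1}{k}\theta$ that is forced by the sector-invariance argument in the outer region.
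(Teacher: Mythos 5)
Your proposal follows essentially the same route as the paper: expand $\fa=\psi_0\circ\fo\circ\psi_0^{-1}$ to get $v_1=v+1+E_v$ and $u_1=u+\tfrac1v+E_u$, control the errors via the balance inequality $|u|^{(k-1)(k+1)/k}<\delta|v|$ (with the worst term bounded by $\delta^{1/(k-1)}/|v|$), and then check the four defining conditions using positivity of $\re(1/v)$ and the fact that the argument of a sum lies between the arguments of its summands. The only quibble is quantitative: $E_v$ is actually $\0\bigl(|u|^{-1/k}+|v|^{-1/(k-1)}\bigr)$ rather than $\0(1/|u|)$, since the $xyR$ term contributes $(a/u)^{1/k}\widetilde{R}(u)$; this does not affect the argument, which only uses that $E_v$ is small for $R$ large.
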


\begin{lem}\label{uvBound}
For any $(u,v)\in\Ouv$ and any positive integer $n$,
\begin{equation}\label{vineq}
\re(v)+\frac{3n}{2} \geq\re(v_n)\geq\re(v)+\frac{n}{2} 
\end{equation}
and 
\begin{equation}\label{uineq}
\re(u)+3\log\left(1+\frac{n}{\re(v)}\right) \geq \re(u_n) \geq \re(u)+\frac{1}{6}\log\left(1+\frac{n}{\re(v)}\right).
\end{equation}
\end{lem}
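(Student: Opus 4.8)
The plan is to reduce everything to two sharp \emph{one-step} estimates for $\fa$ and then telescope. A point $(u,v)\in\Ouv$ corresponds to $(x,y)=\psi_0^{-1}(u,v)$ with $|y|^{k-1}=|b|/|v|$ and $|x|^k=|a|\,|y|^k/|u|$, so $(x,y)\to O$ as we move into $\Ouv$; this lets me substitute the expansion \eqref{fo} and expand $\fa=\psi_0\circ\fo\circ\psi_0^{-1}$. Writing $(u_{j+1},v_{j+1})=\fa(u_j,v_j)$, the leading computation — using $-(k-1)b=1$ and $kab=1$ for $a=-\frac{k-1}{k}$, $b=-\frac{1}{k-1}$ — should give, after shrinking $R,\delta,\theta$ as in Remark~\ref{Ouv},
$$v_{j+1}-v_j=1+\varepsilon_j,\qquad u_{j+1}-u_j=\frac{1}{v_j}+\eta_j,$$
where $|\varepsilon_j|\le\frac12$ and $|\eta_j|\le C|v_j|^{-1-\beta}$ on $\Ouv$ for constants $C,\beta>0$ depending only on $k$. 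Here the contributions of $R$ cancel to leading order in the difference $u_{j+1}-u_j$, while the contributions of $P,Q$ are held below order $|v|^{-1}$ precisely by the defining inequality $|u|^{(k-1)(k+1)/k}<\delta|v|$.

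Granting Lemma~\ref{Ouvinvariant}, the orbit $(u_j,v_j)$ remains in $\Ouv$, so these estimates apply at every step. Telescoping the first relation, $\re(v_n)-\re(v)=\sum_{j=0}^{n-1}\big(1+\re(\varepsilon_j)\big)$; since each summand lies in $[\frac12,\frac32]$, summing over the $n$ steps yields \eqref{vineq}.

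For \eqref{uineq}, telescoping the second relation gives
$$\re(u_n)-\re(u)=\sum_{j=0}^{n-1}\re\!\left(\frac{1}{v_j}\right)+\sum_{j=0}^{n-1}\re(\eta_j).$$
The cone condition $|\Arg(v_j)|<\frac{k-1}{k}\theta<\frac{\pi}{4}$ gives $\re(1/v_j)=\cos^2(\Arg v_j)/\re(v_j)$, whence $\frac{1}{2\re(v_j)}<\re(1/v_j)\le\frac{1}{\re(v_j)}$. Inserting the bounds \eqref{vineq} just proved, namely $\re(v)+\frac{j}{2}\le\re(v_j)\le\re(v)+\frac{3j}{2}$, and comparing the resulting sums with the integrals of the decreasing functions $t\mapsto(\re(v)+t/2)^{-1}$ and $t\mapsto(\re(v)+3t/2)^{-1}$, I obtain the clean bounds
$$\frac{1}{3}\log\!\left(1+\frac{n}{\re(v)}\right)\le\sum_{j=0}^{n-1}\re\!\left(\frac{1}{v_j}\right)\le 2\log\!\left(1+\frac{n}{\re(v)}\right).$$
Finally $\sum_{j=0}^{n-1}|\re(\eta_j)|\le C\sum_{j\ge0}(\re(v)+j/2)^{-1-\beta}=\0\big(\re(v)^{-\beta}\big)$, and this partial error sum is dominated by $\log(1+n/\re(v))$ (using $\log(1+n/\re(v))\ge\frac{n}{2\re(v)}$ when $n\le\re(v)$, and that the left side stays bounded); hence for $R$ large it fits into the gap between the constants $\{1/3,2\}$ above and the target constants $\{1/6,3\}$, giving \eqref{uineq}.

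The main obstacle is the very first step: computing $\fa=\psi_0\circ\fo\circ\psi_0^{-1}$ explicitly enough to read off the two one-step estimates with errors genuinely of smaller order than the main terms $1$ and $1/v$. Because $\psi_0$ is built from $k$th and $(k-1)$th powers and their inverses, one must expand $(x')^k$, $(y')^k$, $(y')^{k-1}$ to the right order and verify, via $y^{k-1}=b/v$, $x^k=ay^k/u$ and the defining inequalities of $\Ouv$, that every contribution of $R$, $P$, $Q$ and of the cross terms is $\0(|v|^{-1-\beta})$; Proposition~\ref{Shrink} and Remark~\ref{Ouv} supply the uniform derivative bounds on a slightly enlarged domain needed to control these remainders. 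Once the one-step estimates are in hand, the telescoping and the integral comparisons above are routine.
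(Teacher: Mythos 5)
Your overall strategy --- one-step estimates for $\fa$, telescoping, and comparison of the resulting sums with integrals using the cone condition on $\Arg(v_j)$ --- is exactly the paper's, and your treatment of \eqref{vineq} and of the main term $\sum_j\re(1/v_j)$ is fine. However, your one-step estimate for the $u$-increment contains a claim that is false on $\Ouv$ and on which your error analysis genuinely relies. You assert $u_{j+1}-u_j=\frac{1}{v_j}+\eta_j$ with $|\eta_j|\le C|v_j|^{-1-\beta}$, i.e.\ that the contributions of $P,Q$ are of strictly smaller order than $|v_j|^{-1}$. The expansion \eqref{u} gives $u_1-u=\frac1v+\frac{u^{(k+1)/k}}{v^{k/(k-1)}}\h(u,v)+\0\left(v^{-2}\right)$, where $\h=\0(1)$ but in general $\h$ does not tend to $0$ (e.g.\ the monomial $y^{k+1}$ in $P$ produces a constant term in $\Pt$). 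Since the only control on $u$ inside $\Ouv$ is $|u|^{(k^2-1)/k}<\delta|v|$, the iterate $u_j$ may be as large as $(\delta|v_j|)^{k/(k^2-1)}$, in which case the middle term has size $\delta^{1/(k-1)}|v_j|^{-1}$ --- the same order as the main term $1/v_j$, merely with a small constant in front. Consequently $\sum_{j=0}^{n-1}|\eta_j|$ is not $\0\left(\re(v)^{-\beta}\right)$; it grows like $\delta^{1/(k-1)}\log\left(1+\frac{n}{\re(v)}\right)$, so your step ``the partial error sum is bounded and fits into the gap'' fails as stated.

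The repair is immediate and is what the paper does: write the increment as $\frac{1}{v_j}\left[1+\0\left(\delta^{1/(k-1)}\right)\right]$, i.e.\ treat the $\h$-term as a small \emph{multiplicative} perturbation of the main term rather than an absolutely summable additive one. After summation it contributes at most a small constant multiple of $\log\left(1+\frac{n}{\re(v)}\right)$, which for $\delta$ small enough is absorbed into the slack between your intermediate constants $\{1/3,2\}$ and the target constants $\{1/6,3\}$ in \eqref{uineq}. With that substitution the rest of your argument goes through and coincides with the paper's proof.
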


It follows from Lemma~\ref{uvBound} that for any $(u,v)\in\Ouv$,
\begin{align}\label{|uv|Bound}
2|v|+3n &
	\geq|v_n|\geq \frac{|v|+n}{2}, \mbox{ and } \\
2|u|+6\log\left(1+\frac{n}{\re(v)}\right)&
	\geq |u_n|\geq \frac{|u|}{2}+\frac{1}{6}\log\left(1+\frac{n}{\re(v)}\right).\notag
\end{align}

In order to simplify the coordinate change, we make the following definitions:
\begin{align}
\Rt(u) & :=\left(\frac{v}{b}\right)^\frac{k-3}{k-1}R(x,y)=R\left(\left(\frac{a}{u}\right)^\frac{1}{k},1\right), \\
\Pt &:= \left(\frac{v}{b}\right)^\frac{k+1}{k-1}P(x,y)=\left(\frac{v}{b}\right)^\frac{k+1}{k-1} P\left(\left(\frac{a}{u}\right)^\frac{1}{k}\left(\frac{b}{v}\right)^\frac{1}{k-1},\left(\frac{b}{v}\right)^\frac{1}{k-1}\right), \notag \\
\Qt &:= \left(\frac{v}{b}\right)^\frac{k+1}{k-1}Q(x,y)=\left(\frac{v}{b}\right)^\frac{k+1}{k-1} Q\left(\left(\frac{a}{u}\right)^\frac{1}{k}\left(\frac{b}{v}\right)^\frac{1}{k-1},\left(\frac{b}{v}\right)^\frac{1}{k-1}\right), \notag \\
\h(u,v) &:= kb^\frac{k}{k-1}\left(\frac{\Qt}{u^\frac{1}{k}}-\frac{\Pt}{a^\frac{1}{k}}\right):=\sum_{j=0}^\infty\frac{\h_j(u)}{v^\frac{j}{k-1}}.\notag
\end{align}
Then $\widetilde{P},\widetilde{Q},\h$ are convergent power series in $u^{-\frac{1}{k}},v^{-\frac{1}{k-1}}$, $\h_j$ is a convergent power series in $u^{-\frac{1}{k}}$, and $\h_j,\Rt$ are holomorphic on $\Ou$.  
Now we find an expression for $u_1$:
\begin{align*}
u_1 
&=u\left(1+\frac{\frac{x^k}{y}+\frac{Q(x,y)}{y}-\frac{P(x,y)}{x}}{1+xyR(x,y)+y^{k-1}+\frac{P(x,y)}{x}}\right)^k \\
&=u\left(1+\frac{1}{v}\frac{\frac{ab}{u}+b\left(\frac{b}{v}\right)^\frac{1}{k-1}\Qt-b\left(\frac{b}{v}\right)^\frac{1}{k-1}\left(\frac{u}{a}\right)^\frac{1}{k}\Pt}{1+\frac{b}{v}\left(\left(\frac{a}{u}\right)^\frac{1}{k}\Rt(u)+1+\left(\frac{b}{v}\right)^\frac{1}{k-1}\left(\frac{u}{a}\right)^\frac{1}{k}\Pt\right)}\right)^k \\
&=u\left(1+\frac{1}{v}\left(\frac{ab}{u}+\frac{u^\frac{1}{k}}{v^\frac{1}{k-1}}\frac{\h(u,v)}{k}\right)\left[
1+\0\left(\frac{1}{v}\right)\right]\right)^k \\
&= u\left(1+\frac{k}{v}\left(\frac{ab}{u}+\frac{u^\frac{1}{k}}{v^\frac{1}{k-1}}\frac{\h(u,v)}{k}\right)
+\0\left(\frac{1}{uv^2},\frac{u^\frac{1}{k}}{v^{2+\frac{1}{k-1}}}\right)\right) \\
&=u+\frac{1}{v}+\frac{u^\frac{k+1}{k}}{v^\frac{k}{k-1}}\h(u,v)+\0\left(\frac{1}{v^2}\right) \\
&=u+\frac{1}{v}+\frac{u^\frac{k+1}{k}}{v^\frac{k}{k-1}}\sum_{j=0}^{k-2}\frac{\h_j(u)}{v^\frac{j}{k-1}}+\0\left(\frac{1}{v^2}\right),
 \end{align*}
 
Similarly:

\begin{align*}
v_1 
&=v\left(1+xyR(x,y)+y^{k-1}+\frac{x^k}{y}+\frac{Q(x,y)}{y} \right)^{-(k-1)} \\
&=v\left(1+\frac{b}{v}\left[\left(\frac{a}{u}\right)^\frac{1}{k}\Rt(u)+1+\frac{a}{u}+\left(\frac{b}{v}\right)^\frac{1}{k-1}\Qt\right]\right)^{-(k-1)} \\
&=v\Bigg(1-(k-1)\frac{b}{v}\left[1+\left(\frac{a}{u}\right)^\frac{1}{k}\Rt(u)+\frac{a}{u}+\left(\frac{b}{v}\right)^\frac{1}{k-1}\Qt\right] \\
&\qquad\quad +k(k-1)\frac{b^2}{v^2}\left[1+\left(\frac{a}{u}\right)^\frac{1}{k}\Rt(u)+\frac{a}{u}+\left(\frac{b}{v}\right)^\frac{1}{k-1}\Qt\right]^2+\0\left(\frac{1}{v^3}\right)\Bigg) \\
&=v+1+\left(\frac{a}{u}\right)^\frac{1}{k}\Rt(u)+\frac{a}{u}+\left(\frac{b}{v}\right)^\frac{1}{k-1}\Qt+\frac{k}{k-1}\frac{1}{v}\left[1+\left(\frac{a}{u}\right)^\frac{1}{k}\Rt(u)+\frac{a}{u}\right]^2+\0\left(\frac{1}{v^\frac{k}{k-1}}\right) \\
&=v+1+\sum_{j=0}^{k-1}\frac{g_j(u)}{v^\frac{j}{k-1}}+\0\left(\frac{1}{v^\frac{k}{k-1}}\right),
\end{align*}
where $g_0$ is a polynomial in $u^{-\frac{1}{k}}$ with no constant term and $g_j$ are power series in $u^{-\frac{1}{k}}$.  We will frequently use these properties of the $\{g_j\}$ in what follows.  \\

To summarize, for $(u,v)\in\Ouv$ we have derived the following equations for $u_1,v_1$:
\begin{align}
u_1 &
	=u+\frac{1}{v}+\frac{u^\frac{k+1}{k}}{v^\frac{k}{k-1}}\h(u,v)+\0\left(\frac{1}{v^2}\right)
	=u+\frac{1}{v}+\frac{u^\frac{k+1}{k}}{v^\frac{k}{k-1}}\sum_{j=0}^{k-2}\frac{\h_j(u)}{v^\frac{j}{k-1}}+\0\left(\frac{1}{v^2}\right) \label{u} \\
v_1 &= v+1+\sum_{j=0}^{k-1}\frac{g_j(u)}{v^\frac{j}{k-1}}+\0\left(\frac{1}{v^\frac{k}{k-1}}\right)
	=v+1+\0\left(\frac{1}{u^\frac{1}{k}},\frac{1}{v^\frac{1}{k-1}}\right).\label{v}
\end{align}

\begin{proof}[Proof of Lemma~\ref{Ouvinvariant}]
Fix any $(u,v)\in\Ouv$.  First we show that $\re(u_1)>R$.
\begin{align*}
\re(u_1) &= \re(u)+\re\left(\frac{1}{v}\left[1+\frac{u^\frac{k+1}{k}}{v^\frac{1}{k-1}}\h(u,v)+\0\left(\frac{1}{v}\right)\right]\right) \\
	&>\re(u)+\frac{1}{2|v|}-\frac{2\delta^\frac{1}{k-1} C}{|v|}
	 >\re(u)>R
\end{align*}
where $C$ is some constant such that $|\h(u,v)|<C$, $R$ is sufficiently large and $\delta$ is chosen to be suitably small.  Next we check that the arguments of $u_1,v_1$ remain small enough.  
\begin{align*}
|\Arg (v_1)| &\leq\max\left\{|\Arg(v)|,\left|\Arg\left(1+\sum_{j=0}^{k-1}\frac{g_j(u)}{v^\frac{j}{k-1}}+\0\left(\frac{1}{v^\frac{k}{k-1}}\right)\right)\right|\right\}\leq\frac{k-1}{k}\theta,
\end{align*}
for $R$ large enough.
\begin{align*}
|\Arg(u_1)| &\leq\max\left\{|\Arg(u)|,
|\Arg(v)|+\left|\Arg\left(1+\frac{u^\frac{k+1}{k}}{v^\frac{1}{k-1}}\h(u,v)+\0\left(\frac{1}{v}\right)\right)\right|\right\}<\theta,
\end{align*}
for $\delta$ suitably small.  Finally we verify that $|u_1|$ remains small enough relative to $|v_1|$. 
\begin{align*}
|u_1|^\frac{k^2-1}{k} &<|u|^\frac{k^2-1}{k}\left|1+\frac{1}{uv}\left(1+\frac{u^\frac{k+1}{k}}{v^\frac{1}{k-1}}\h(u,v)+\0\left(\frac{1}{v}\right)\right)\right|^\frac{k^2-1}{k} \\
	&<\delta|v|\left|1+\frac{k^2-1}{k}\frac{1}{uv}\left(1+\frac{u^\frac{k+1}{k}}{v^\frac{1}{k-1}}\h(u,v)+\0\left(\frac{1}{v}\right)\right)\right| \\
	&<\delta|v|\bigg|1+\frac{1}{2v}\bigg|<\delta|v_1|,
\end{align*}
for large enough $R$.  Therefore, $\fa\left(\Ouv\right)\subset\Ouv$.
\end{proof}

\begin{proof}[Proof of Lemma~\ref{uvBound}]
\begin{align*}
\re(v_n) &=\re(v)+n+\sum_{j=0}^{n-1}\0\left(u_j^{-\frac{1}{k}},v_j^{-\frac{1}{k-1}}\right)\\
u_n &= u+\sum_{j=0}^{n-1}\frac{1}{v_j}\left[1+\frac{u_j^\frac{k+1}{k}}{v_j^\frac{1}{k-1}}\h(u_j,v_j)+\0\left(\frac{1}{v_j}\right)\right] 
\end{align*}
In the expression for $\re(v_n)$, the $\0$-terms are bounded independently of $j$ and so we can choose $R$ sufficiently large that, for each $j$, the term $\0\left(u_j^\frac{-1}{k},v_j^\frac{-1}{k-1}\right)$ is bounded above by $\frac{1}{2}$.  Hence, $|\re(v_n-v)-n|\leq\frac{n}{2}$.  Now we find bounds on $\frac{|u_n|}{|v_n|^\alpha}$ and $\re(u_n)$:
\begin{align*}
\re(u_n-u) 
	&\leq \sum_{j=0}^{n-1}\frac{1}{|v_j|}
		\left(1+2\delta^\frac{1}{k-1}C\right) 
	\leq \sum_{j=0}^{n-1}\frac{\frac{3}{2}}{\re(v)+\frac{1}{2}j}
	\leq \int_{-1}^{n-1} \frac{3dx}{x+2\re(v)}<3\log\left(1+\frac{n}{\re(v)}\right) \\
\re(u_n-u)
	&\geq \sum_{j=0}^{n-1}\frac{1}{|v_j|}
		\left(1-2\delta^\frac{1}{k-1}C\right)
	\geq \sum_{j=0}^{n-1}\frac{\frac{1}{4}}{\re(v)+\frac{3}{2}j} 
	\geq \frac{1}{6} \int_0^n\frac{dx}{x+\frac{2}{3}\re(v)} 
	\geq\frac{1}{6}\log\left(1+\frac{n}{\re(v)}\right)
\end{align*}
Therefore we have the desired bounds on $\re(u_n),\re(v_n)$.
\end{proof}

\section{Fatou Coordinate}
In this section we will perform a coordinate change to simplify the expression for~\eqref{v}.   For any $(u,v)\in\Ouv$ we define:
$$(u,w):=\widetilde{\psi}_1(u,v):=\left(u,v\left[1+\sum_{j=0}^{k-1}\frac{\phi_j(u)}{v^\frac{j}{k-1}}\right]\right),$$
where $\{\phi_j\}$ are holomorphic functions in $\Ou$ that we will define later as solutions of certain differential equations.  
Let $w_n:=\pi_2\circ\widetilde{\psi}_1\circ f_1^n(u,v)=\pi_2\circ\widetilde{\psi}_1(u_n,v_n)$.  The goal of this section is to show that the sequence $\left(w_n-n\right)_{n=1}^\infty$ converges uniformly to a Fatou coordinate $\omega$, i.e. $\omega\circ f=\omega+1$.  Also, the map $(u,v)\mapsto(u,\omega(u,v))$ defines a coordinate change, i.e. a biholomorphism onto its image.  
Before introducing the Fatou coordinate, we need to simplify the expression for $w_1$ in terms of $u,w$ and define the holomorphic functions $\left\{\phi_j\right\}$.  Using Taylor series expansion: 
\begin{equation}\label{phij}
\phi_j(u_1)=\phi_j(u)+(u_1-u)\phi_j'(u)+\frac{(u_1-u)^2}{2}\phi_j''\left(\hat{u}\right),
\end{equation}
where $\hat{u}$ is some point on the line between $u$ and $u_1$ and it may vary depending on $j$.  Then
\begin{align*}
w_1 &=v_1+\sum_{j=0}^{k-1}\phi_j(u_1)v_1^{1-\frac{j}{k-1}} \\
	&=v_1+\sum_{j=0}^{k-1}
		\left[\phi_j(u)+\frac{\phi_j'(u)}{v}\left(1+\frac{u^\frac{k+1}{k}}{v^\frac{1}{k-1}}\sum_{l=0}^{k-2}\frac{\h_l(u)}{v^\frac{l}{k-1}}+\0\left(\frac{1}{v}\right)\right)+\0\left(\frac{\phi_j''\left(\hat{u}\right)}{v^2}\right)\right]\cdot \\
	&\qquad\qquad\qquad v^{1-\frac{j}{k-1}}\left[1+\frac{1}{v}\left(1+\sum_{m=0}^{k-1}\frac{g_m(u)}{v^\frac{m}{k-1}}+\0\left(\frac{1}{v^\frac{k}{k-1}}\right)\right)\right]^{1-\frac{j}{k-1}} \\
	&=v_1+\sum_{j=0}^{k-1}v^{1-\frac{j}{k-1}}
		\left[\phi_j(u)+\frac{\phi_j'(u)}{v}\left(1+\frac{u^\frac{k+1}{k}}{v^\frac{1}{k-1}}\sum_{l=0}^{k-2}\frac{\h_l(u)}{v^\frac{l}{k-1}}\right)+\0\left(\frac{\phi_j'(u)}{v^2},\frac{\phi_j''\left(\hat{u}\right)}{v^2}\right)\right]\cdot \\
	&\qquad\qquad\qquad\left[1+\left(1-\frac{j}{k-1}\right)\frac{1}{v}\left(1+\sum_{m=0}^{k-1}\frac{g_m(u)}{v^\frac{m}{k-1}}\right)+
	\frac{(k-1-j)}{v^2}\0\left(j,\frac{1}{v^\frac{1}{k-1}}\right)\right] \\
	&=w+1+\sum_{j=0}^{k-1}\frac{1}{v^\frac{j}{k-1}}\left(g_j(u)+
		\frac{k-1-j}{k-1}(1+g_0(u))\phi_j(u)+\phi_j'(u)\right) \\
		&\qquad+\sum_{j=0}^{k-1}\sum_{m=1}^{k-1}
		\frac{1}{v^\frac{j+m}{k-1}}\left(
		\frac{k-1-j}{k-1}\phi_j(u)g_m(u)+\phi_j'(u)u^\frac{k+1}{k}\h_{m-1}(u)\right) \\
	&\qquad+\sum_{j=0}^{k-1}\0\left(\frac{1}{v^\frac{k}{k-1}},\frac{\phi_0(u)}{v^\frac{k}{k-1}},j(k-1-j)\frac{\phi_j(u)}{v^\frac{k+j-1}{k-1}},\frac{\phi_j'(u)}{v^\frac{k+j-1}{k-1}},\frac{\phi_j''\left(\hat{u}\right)}{v^\frac{k+j-1}{k-1}}\right) \\
\end{align*}
In order to simplify the equation for $w_1$, define $F_j,G_j$ as:
\begin{align}\label{FjGj}
F_j(u)&:=\frac{k-1-j}{k-1}\left(1+g_0(u)\right), \\
G_j(u)&:=-\left[g_j(u)+\sum_{l=0}^{j-1}\left(\frac{k-1-l}{k-1}\phi_l(u)g_{j-l}(u)+\phi_l'(u)u^\frac{k+1}{k}\h_{j-l-1}(u)\right)\right].\notag
\end{align}
for all integers $0\leq j \leq k-1$.  Therefore,
\begin{align}\label{w1}
w_1 &=w+1+
	\sum_{j=0}^{k-1}\frac{\phi_j'(u)+F_j(u)\phi_j(u)-G_j(u)}{v^\frac{j}{k-1}} \\
	&\qquad +\0\left(\frac{1}{v^\frac{k}{k-1}},\frac{\phi_0'(u)}{v}\right)+
		\sum_{j=0}^{k-1}\0\left(
			\frac{(k-1-j)\phi_j(u)}{v^\frac{k}{k-1}},
			\frac{ju^\frac{k+1}{k}\phi_j'(u)}{v^\frac{k}{k-1}},
			\frac{\phi_j''\left(\hat{u}\right)}{v^\frac{k+j-1}{k-1}}\right) \notag
\end{align}
for $0\leq j\leq k-1$.  Now we want to show that we can find $\{\phi_j\}$ such that $\phi_j'+F_j\phi_j-G_j\equiv 0$ on $\Ouv$ and all of the $\0$-terms involving $\{\phi_j,\phi_j',\phi_j''\}$ are small enough. 

\begin{prop}\label{existsdiffeqsol}
If $R$ is large enough and $\delta,\theta$ are small enough, then there exist holomorphic functions $\{\phi_j\}_{0\leq j\leq k-1}$ on $\Ou$ such that
\begin{equation}\label{diffeq}
\phi_j'+F_j\phi_j-G_j\equiv 0
\end{equation}
on $\Ou$, where $F_j,G_j$ are defined using ~\eqref{FjGj}.  Furthermore, we have the bounds 
\begin{align}\label{OGphi}
G_j(u) &=\0\left(u^\frac{j-1}{k}\right),
	& G_j'(u)&=\0\left(u^\frac{j-1-k}{k}\right), && \notag \\
\phi_m(u) &=\0\left(u^\frac{m-1}{k}\right), 
	& \phi_m'(u)&=\0\left(u^\frac{m-1-k}{k}\right),
	& \phi_m''(u)&=\0\left(u^\frac{m-1-2k}{k}\right), \\
%\mbox{ and } & 
\phi_{k-1}(u)&=\0\left(u^\frac{2k-2}{k}\right),
	& \phi_{k-1}'(u)&=\0\left(u^\frac{k-2}{k}\right),
	& \phi_{k-1}''(u)&=\0\left(u^{-\frac{2}{k}}\right), \notag
\end{align}
on $\Ou$, where $0\leq m<k-1$.
\end{prop}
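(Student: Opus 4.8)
The plan is to solve the linear first-order differential equations \eqref{diffeq} recursively in $j$ and then read off the bounds \eqref{OGphi} from the resulting integral formulas. The key structural feature is that, by \eqref{FjGj}, $F_j(u)=\frac{k-1-j}{k-1}\left(1+g_0(u)\right)$, and since $g_0$ has no constant term we have $g_0=\0\left(u^{-1/k}\right)$; hence for $0\le j<k-1$ the coefficient satisfies $\re F_j\approx\frac{k-1-j}{k-1}>0$ on $\Ou$ (after taking $R$ large and $\theta$ small), whereas $F_{k-1}\equiv 0$. The second observation is that $G_j$ depends only on $g_j,\h_\bullet$ and on $\phi_0,\dots,\phi_{j-1}$ together with their derivatives, so the equations can be solved one at a time in the order $j=0,1,\dots,k-1$. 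I expect the delicate point to be the control of the integrating factor inside the sector; the sign $\frac{k-1-j}{k-1}>0$ is exactly what makes the construction converge, and the resonant exceptional index $j=k-1$ is what forces the anomalous bounds on $\phi_{k-1}$.

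For $0\le j<k-1$, let $\mathcal{F}_j$ be an antiderivative of $F_j$ on the simply connected domain $\Ou$; since $F_j=\frac{k-1-j}{k-1}+\0\left(u^{-1/k}\right)$, we have $\mathcal{F}_j(u)=\frac{k-1-j}{k-1}\,u+\lo(u)$, so that $\re\mathcal{F}_j$ is strictly increasing along any path of increasing real part. I would then define
\[
\phi_j(u):=e^{-\mathcal{F}_j(u)}\int_{b}^{u}e^{\mathcal{F}_j(s)}G_j(s)\,ds,
\]
where $b\in\Ou$ is a fixed base point and the integral is taken along any path in $\Ou$; path-independence and holomorphy follow from the holomorphy of $G_j$ and simple connectivity, and differentiating shows at once that $\phi_j$ solves \eqref{diffeq}. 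Changing $b$ alters $\phi_j$ only by a multiple of $e^{-\mathcal{F}_j(u)}$, which is exponentially small, so any base point serves. For the exceptional index $j=k-1$, where $F_{k-1}\equiv0$, the equation reduces to $\phi_{k-1}'=G_{k-1}$ and I would simply set $\phi_{k-1}(u):=\int_b^u G_{k-1}(s)\,ds$.

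The bounds \eqref{OGphi} are then obtained by induction on $j$. Assuming the stated estimates for $\phi_0,\dots,\phi_{j-1}$ (the derivative bounds following from the function bounds via Proposition~\ref{Shrink}, with the slightly larger domain supplied by Remark~\ref{Ouv}), the defining formula \eqref{FjGj} for $G_j$ gives, term by term, $\phi_l(u)g_{j-l}(u)=\0\left(u^{(l-1)/k}\right)$ and $\phi_l'(u)u^{(k+1)/k}\h_{j-l-1}(u)=\0\left(u^{l/k}\right)$; each term is thus $\0\left(u^{(j-1)/k}\right)$ or smaller, whence $G_j=\0\left(u^{(j-1)/k}\right)$. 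For $0\le j<k-1$, writing $e^{-\int_s^u F_j}$ for $e^{\mathcal{F}_j(s)-\mathcal{F}_j(u)}$, one has $\left|e^{-\int_s^u F_j}\right|\le 1$ along the chosen path and, near $s=u$, decay at rate $\re F_j(u)\approx\frac{k-1-j}{k-1}$; the resulting Laplace-type concentration near the endpoint yields $\phi_j\approx G_j/F_j=\0\left(u^{(j-1)/k}\right)$, while the remainder of the path contributes exponentially little. For $j=k-1$, integrating $G_{k-1}=\0\left(u^{(k-2)/k}\right)$ gives $\phi_{k-1}=\0\left(u^{(2k-2)/k}\right)$, the extra power of $u$ accounting for the anomalous bound.

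Finally, the derivative estimates on $G_j$, on $\phi_m$ for $0\le m<k-1$, and on $\phi_{k-1}$ all follow from the corresponding function bounds by Proposition~\ref{Shrink}, which lowers the exponent by one per derivative: starting from $G_j=\0(u^{(j-1)/k})$, $\phi_m=\0(u^{(m-1)/k})$, and $\phi_{k-1}=\0(u^{(2k-2)/k})$ one recovers exactly the exponents $\frac{j-1-k}{k}$, $\frac{m-1-k}{k}$, $\frac{m-1-2k}{k}$, $\frac{k-2}{k}$, and $-\frac{2}{k}$ recorded in \eqref{OGphi}. The main obstacle throughout is the sector estimate on the integrating factor used in the middle paragraph, namely verifying that $\re\mathcal{F}_j$ is genuinely increasing and that the integral concentrates at its endpoint; this is precisely where the positivity of $\frac{k-1-j}{k-1}$ for $j<k-1$, and its vanishing at $j=k-1$, enter decisively.
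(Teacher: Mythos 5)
Your proposal is correct and follows essentially the same route as the paper: recursive solution via the integrating factor $e^{-\int F_j}$, separate treatment of the resonant index $j=k-1$ where $F_{k-1}\equiv 0$, extraction of the leading term $G_j/F_j$ by endpoint concentration of the Laplace-type integral (which the paper makes rigorous by a single integration by parts followed by the exponential decay estimate along the radial path $\gamma(t)=tu$), and Cauchy estimates on a slightly enlarged sector for all derivative bounds. The only difference is one of detail, not of method.
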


\begin{proof}
First we verify that for each $j$ and given $F_j,G_j$ then ~\eqref{diffeq} has a solution.  Given $\{\phi_l\}_{0\leq l< j}$, we can define $G_j$ as in ~\eqref{FjGj} and use this to define $\phi_j$.  Since $G_0$ is already defined, we can start this process. We follow the techniques used in Remark~\ref{Ouv} to choose $0\ll R_2<R_1<R$ and $0<\theta<\theta_1<\theta_2<\frac{\pi}{4}$ so that $F_j,G_j$ are holomorphic on $\Ou_{R_2,\theta_2}$, $\phi_j$ is holomorphic on $\Ou_{R_1,\theta_1}$, and the derivatives of $G_j,\phi_j$ are bounded on the subset $\Ou\subset\Ou_{R_1,\theta_1}\subset\Ou_{R_2,\theta_2}$, where these domains were defined in~\eqref{Ou}.   Fix $u_0$ such that $R_2<u_0<R_1$.  A solution to~\eqref{diffeq} is:
\begin{align}\label{diffeqsol}
\phi_j(u)&=e^{-\int_{u_0}^u F_j(\nu)d\nu}
\int_{u_0}^u G_j(\nu)e^{\int_{u_0}^\nu F_j(\zeta)d\zeta}d\nu,
\end{align}
where $u\in\Ou_{R_1,\theta_1}$ and the integral is taken along any simple, smooth curve between $u_0$ and $u$ contained in $\Ou_{R_2,\theta_1}\subset\Ou_{R_2,\theta_2}$.   Note that when $j=k-1$, $F_{k-1}\equiv 0$ so~\eqref{diffeqsol} simplifies to $\phi_{k-1}(u)=\int_{u_0}^uG_{k-1}(\nu)d\nu$.  Since there are only finitely many $j$ we can repeat this process to get $\{\phi_j\}_{0\leq j\leq k-1}$ holomorphic on $\Ou_{R_1,\theta_1}$ that satisfy the differential equation~\eqref{diffeq} and so that $\{G_j,G_j',\phi_j,\phi_j',\phi_j''\}_{0\leq j\leq k-1}$ are all defined and bounded on $\Ou$ as in~\eqref{OGphi}.  \\

Now we want to verify the orders in ~\eqref{OGphi}.  When $j=0$ and $u\in\Ou$ we know:
$$G_0(u)=-g_0(u)=\0\left(u^{-\frac{1}{k}}\right)\qquad\mbox{ and }\qquad
G_0'(u)=-g_0'(u)=\0\left(u^{-\frac{k+1}{k}}\right).$$
Suppose that the orders on $\{G_l,G_l',\phi_l,\phi_l',\phi_l''\}_{0\leq l<j}$ given in~\eqref{OGphi} hold for some $1\leq j\leq k-1$.  Then for $u\in\Ou$, 
$$G_j(u)=\0\left(g_j(u),\left\{\phi_l(u),\phi_l'(u)u^\frac{k+1}{k}\right\}_{0\leq l<j}\right)
	=\0\left(u^\frac{j-1}{k}\right) \qquad\mbox{ and }\qquad
G_j'(u)	=\0\left(u^\frac{j-1-k}{k}\right)$$
where $G_j'$ can be bounded on $\Ou$ using Cauchy estimates as describe in Remark~\ref{Ouv}.  Note that if $j=k-1$, then for $u\in\Ou$ and $n\in\N$,
$$\phi_{k-1}^{(n)}(u)= \frac{d^n}{du^n}\int_{u_0}^u G_{k-1}(\nu)d\nu=\0\left(u^\frac{k-2-(n-1)k}{k}\right).$$
It remains to show that for $0\leq j<k-1$, if the orders in~\eqref{OGphi} are satisfied by $\{G_l,G_l',\phi_l,\phi_l',\phi_l''\}_{0\leq l<j}$ and hence by $\{G_j,G_j'\}$, then they must also be satisfied by $\phi_j,\phi_j',\phi_j''$. Recall that $\{G_l\}_{0\leq l\leq j}$ are holomorphic on $\Ou_{R_2,\theta_2}$ and we want $\phi_j$ to be holomorphic on $\Ou_{R_1,\theta_1}$.  Given any $u\in\Ou_{R_1,\theta_1}$, define $c_u$ so that $\Arg (c_u)=\Arg (u)$ and $\re(c_u)=u_0$.  Then $c_u\in\Ou_{R_2,\theta_1}$.  Parametrize the line segment between $c_u$ and $u$ by $\gamma(t):=tu$, where $\frac{c_u}{u}\leq t\leq 1$.  By using integration by parts once in~\eqref{diffeqsol} and this parametrization we can express $\phi_j$ as:
\begin{align*}
\phi_j(u)
& =e^{-\int_{u_0}^u F_j(\nu)d\nu} \left(\frac{G_j(\nu)}{F_j(\nu)}e^{\int_{u_0}^\nu F_j(\zeta)d\zeta}\Bigg|_{u_0}^u-
\int_{u_0}^u \frac{G_j'(\nu)F_j(\nu)-G_j(\nu)F_j'(\nu)}{F_j(\nu)^2}e^{\int_{u_0}^\nu F_j(\zeta)d\zeta}d\nu\right) \\
&=\frac{G_j(u)}{F_j(u)}
+\0\left(e^{-\int_{u_0}^u F_j(\nu)d\nu}\right)
-e^{-\int_{u_0}^u F_j(\nu)d\nu}
\int_{c_u}^u \Gt_j(\nu)e^{\int_{u_0}^\nu F_j(\zeta)d\zeta}d\nu \\
&=\frac{G_j(u)}{F_j(u)}+\0\left(e^{-u}\right)-u\int_{\frac{c_u}{u}}^1 \Gt_j(tu)e^{-u\int_{t}^{1} F_j(\tau u)d\tau}dt
\end{align*}
where $u\in\Ou_{R_1,\theta_1}$, $\Gt_j(u):=\frac{G_j'(u)F_j(u)-G_j(u)F_j'(u)}{F_j(u)^2}=\0\left(u^\frac{j-1-k}{k}\right)$, $F_j(u)=\0(1)$, and we are integrating along $\gamma(t)$.  Assume $\Gt_j\not\equiv 0$ since otherwise there is nothing left to prove.  
\begin{align*}
\Bigg| \int_{\frac{c_u}{u}}^1 \Gt_j(tu)e^{-u\int_{t}^{1} F_j(\tau u)d\tau}dt \Bigg|
	&\leq \left(1-\frac{c_u}{u}\right) 
		\max_{\frac{c_u}{u}\leq t\leq 1}\left( \big| \Gt_j(tu)\big|
		e^{-\frac{k-1-j}{k-1}\re\left(u\int_t^1(1+g_0(\tau u))d\tau\right)}\right) \\
	&\leq C \max_{\frac{c_u}{u}\leq t\leq 1}
	(|u|t)^\frac{j-1-k}{k}e^{-\frac{k-1-j}{k-1}\frac{1-t}{2}|u|} 
\end{align*}
for some constant $C>0$.  When $t\neq 1$, the exponential term's exponent is a negative multiple of $|u|$ so the exponential term can be bounded above by a constant times an arbitrarily small power of $|u|$ whereas the other term remains bounded above by a constant.  When $t=1$, the integral is bounded above by $C|u|^\frac{j-1-k}{k}$.  Therefore, 
$$\phi_j(u)=\frac{G_j(u)}{F_j(u)}+\0\left(e^{-u}\right)+u\0\left(u^\frac{j-1-k}{k}\right)=\0\left(u^\frac{j-1}{k}\right)$$
on $\Ou_{R_1,\theta_1}$.  By shrinking the domain of $\phi_j',\phi_j''$ to $\Ou$, as discussed in Remark~\ref{Ouv}, we can use Cauchy's estimates and the order of $\phi_j$ to obtain the desired orders on the derivatives of $\phi_j$.  In particular, for any $u\in\Ou$ and $n\in\N$:
$$\phi_j^{(n)}(u)=\0\left(\frac{u^\frac{j-1}{k}}{u^n}\right)=\0\left(u^\frac{j-1-kn}{k}\right).$$
If we take $n=1,2$ we get the desired results.
\end{proof}

Let $(u,v)\in\Ouv$.  In the expression for $w_1$, we want to bound the terms $\phi_j''\left(\hat{u}\right)$ in terms of $u$ instead of $\hat{u}$.  Since $\hat{u}$ is on the line segment between $u$ and $u_1$, $\hat{u}\in\Ou$ and $|\hat{u}|>c|u|$ for some constant $c>0$.  Combining this with the previous result,
$$\phi_j''\left(\hat{u}\right)=\0\left(u^\frac{j-1-2k}{k}\right).$$

Now that we have bounds on $\phi_j$ and its derivatives, we can re-write $w_1$ from ~\eqref{w1} as:
\begin{equation*}
w_1=w+1+\epsilon_1(u,v)+\epsilon_2(u,v), 
\end{equation*}
where $\epsilon_1,\epsilon_2$ are functions of $u,v$ with the following orders:
\begin{align*}			
\epsilon_1(u,v) &=\0\left(\frac{\phi_0'(u)}{v},\frac{\phi_0''(\hat{u})}{v}\right)
	=\0\left(\frac{1}{u^\frac{k+1}{k}v}\right) \\
\epsilon_2(u,v) &=\0\left(\frac{1}{v^\frac{k}{k-1}}\right)+
	\sum_{j=0}^{k-1}\0\left(
		\frac{(k-1-j)\phi_j(u)}{v^\frac{k}{k-1}},
		\frac{ju^\frac{k+1}{k}\phi_j'(u)}{v^\frac{k}{k-1}},
		\frac{j\phi_j''(\hat{u})}{v^\frac{k+j-1}{k-1}}\right)
	 =\0\left(\frac{u^\frac{2k-1}{k}}{v^\frac{k}{k-1}}\right).
\end{align*}

\begin{prop}For any $(u,v)\in\Ouv$ and $w=w(u,v)$, we have 
$$\lim_{n\to\infty} \frac{u_n}{\log n}=1\qquad\mbox{ and }\qquad\lim_{n\to\infty}\frac{w_n}{n}=1.$$
\end{prop}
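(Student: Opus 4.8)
The plan is to telescope both sequences using the one–step recursions already derived and to show that the accumulated error terms are negligible against the main growth. Writing $f_1^{\,j}(u,v)=(u_j,v_j)$, I would iterate the identity $w_{j+1}=w_j+1+\epsilon_1(u_j,v_j)+\epsilon_2(u_j,v_j)$ to obtain
$$w_n=w+n+\sum_{j=0}^{n-1}\bigl(\epsilon_1(u_j,v_j)+\epsilon_2(u_j,v_j)\bigr),$$
and similarly iterate \eqref{u} to obtain
$$u_n=u+\sum_{j=0}^{n-1}\frac{1}{v_j}+\sum_{j=0}^{n-1}\left(\frac{u_j^{(k+1)/k}}{v_j^{k/(k-1)}}\h(u_j,v_j)+\0\!\left(\frac1{v_j^2}\right)\right).$$
Throughout I would feed in the quantitative bounds of Lemma~\ref{uvBound}: since $\re(u_j)>R$ and $|\Arg(u_j)|<\theta<\pi/4$, we have $|v_j|\geq\frac12(|v|+j)\gtrsim j$, while $|u_j|\asymp\re(u_j)=\0(\log j)$ and $|u_j|$ is bounded below by a positive constant.

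For $w_n/n\to1$ the two error sums are immediate. Using $\epsilon_1=\0\bigl(u^{-(k+1)/k}v^{-1}\bigr)$ together with the lower bound on $|u_j|$ gives $\epsilon_1(u_j,v_j)=\0(1/j)$, hence $\sum_{j=0}^{n-1}\epsilon_1(u_j,v_j)=\0(\log n)$; using $\epsilon_2=\0\bigl(u^{(2k-1)/k}v^{-k/(k-1)}\bigr)$ gives $\epsilon_2(u_j,v_j)=\0\bigl((\log j)^{(2k-1)/k}j^{-k/(k-1)}\bigr)$, and since $k/(k-1)>1$ this series converges, so $\sum_{j=0}^{n-1}\epsilon_2(u_j,v_j)=\0(1)$. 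Thus $w_n=w+n+\0(\log n)$ and $w_n/n\to1$.

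For $u_n/\log n\to1$ the exact constant matters, so more care is needed. First I would sharpen Lemma~\ref{uvBound} to $v_n/n\to1$: iterating \eqref{v} gives $v_n=v+n+\sum_{j=0}^{n-1}\0\bigl(u_j^{-1/k},v_j^{-1/(k-1)}\bigr)$, and both families of error terms tend to $0$ (as $u_j^{-1/k}=\0((\log j)^{-1/k})$ and $v_j^{-1/(k-1)}=\0(j^{-1/(k-1)})$), so their partial sums are $\lo(n)$ and $v_j/j\to1$. The two error sums in the formula for $u_n$ are then both $\0(1)$ by the same convergence as for $\epsilon_2$ (the $\h$-term) and by $\sum j^{-2}<\infty$ (the $\0(v_j^{-2})$ term). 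It remains to prove $\sum_{j=1}^{n-1}v_j^{-1}=\log n+\lo(\log n)$. Writing $\tfrac1{v_j}=\tfrac1j\cdot a_j$ with $a_j:=j/v_j\to1$, I would split $\sum_{j=1}^{n-1}\tfrac1{v_j}=\sum_{j=1}^{n-1}\tfrac1j+\sum_{j=1}^{n-1}\tfrac{a_j-1}{j}$; the first sum is $\log n+\0(1)$, and for the second, given $\varepsilon>0$ one picks $N$ with $|a_j-1|<\varepsilon$ for $j\geq N$, bounding the tail by $\varepsilon\log n+\0(1)$ and the head by a constant, so the second sum is $\lo(\log n)$. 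Combining, $u_n=\log n+\lo(\log n)$ and $u_n/\log n\to1$.

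The main obstacle is exactly this last asymptotic for the complex harmonic-type sum $\sum v_j^{-1}$: Lemma~\ref{uvBound} alone only pins $\re(v_n)$ between $\re(v)+n/2$ and $\re(v)+3n/2$, which is far too weak to extract the exact coefficient $1$. The crux is therefore the refinement $v_j\sim j$ together with the Abel/Ces\`aro summation argument that upgrades $a_j\to1$ to $\sum a_j/j\sim\log n$; every other estimate reduces to inserting the logarithmic-versus-polynomial size bounds of Lemma~\ref{uvBound} into the explicit error orders already recorded for $\epsilon_1,\epsilon_2$ and the $\h$-term.
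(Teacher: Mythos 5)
Your proof is correct, and the first half (telescoping $w_n$ and checking that $\sum_j\epsilon_1+\sum_j\epsilon_2$ is $\lo(n)$) is exactly what the paper does. For the harder limit $u_n/\log n\to 1$ you take a genuinely different route at the crux, namely the asymptotic $\sum_{j<n}v_j^{-1}=\log n+\lo(\log n)$. The paper does \emph{not} sharpen the bound on $v_n$; instead it substitutes $\tfrac1v=\tfrac1w\bigl(1+\phi_0(u)+\0(v^{-1/(k-1)})\bigr)$, invokes the just-proved fact $w_j\sim j$ to replace $w_j$ by $j$, and then kills the correction term by the explicit estimate $\tfrac1{\log n}\sum_j \tfrac{1}{j(\log j)^{1/k}}=\0\bigl((\log n)^{-1/(2k)}\bigr)$, using $\phi_0=\0(u^{-1/k})$. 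You instead bypass the Fatou-coordinate machinery entirely: you iterate \eqref{v} to get $v_n=v+n+\lo(n)$ (a Ces\`aro argument on the null error terms, legitimate since $|u_j|,|v_j|\to\infty$), and then upgrade $j/v_j\to1$ to $\sum_j (j/v_j-1)/j=\lo(\log n)$ by an $\varepsilon$-splitting of the sum. Your version is more elementary and self-contained --- it needs nothing about $\phi_0$ or $w$ beyond Lemma~\ref{uvBound} and \eqref{v} --- and it makes fully explicit the ``replace $w_j$ by $j$'' step that the paper leaves somewhat implicit; the paper's version, by contrast, gets a quantitative rate $\0((\log n)^{-1/(2k)})$ for the error rather than a bare $\lo(\log n)$, and reuses structures ($w$, $\phi_0$) it has already built. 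Both arguments are sound.
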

\begin{proof}The sequence $\left(\frac{w_n}{n}\right)$ converges to $1$ since, for some constants $C,l>0$, 	
$$\lim_{n\to\infty}\frac{1}{n}\sum_{j=0}^{n-1}\left(\epsilon_1(u_j,v_j)+\epsilon_2(u_j,v_j)\right)
\leq \lim_{n\to\infty}\frac{1}{n}\sum_{j=l}^{n-1}\left(
\frac{C}{(\log j)^\frac{k+1}{k}j}+\frac{C(\log j)^\frac{2k-1}{k}}{j^\frac{k+1}{k}}\right)=0,$$
where we are using the bounds on $u_n,v_n$ from Proposition~\ref{uvBound}, and
$$\lim_{n\to\infty}\frac{w_n}{n} 
	=\lim_{n\to\infty}\frac{1}{n}\left[w+n+\sum_{j=0}^{n-1}
		\left(\epsilon_1(u_j,v_j)+\epsilon_2(u_j,v_j)\right)\right]
	=1.$$
In order to show $\lim_{n\to\infty}\frac{u_n}{\log n}=1$, we first need to replace the $v_j$ terms in $u_n$ with $w_j$ terms:
$$\frac{1}{v} 
	=\frac{1}{w}\left(1+\sum_{j=0}^{k-1}\frac{\phi_j(u)}{v^\frac{j}{k-1}}\right)
		=\frac{1}{w}\left( 1+\phi_0(u)+\0\left(\frac{1}{v^\frac{1}{k-1}}\right)\right).$$
We use the bounds on $u_n,v_n$ from Lemma~\ref{uvBound} to show that the sequence $\left(\frac{u_n}{\log n}\right)$ converges to $1$.  
\begin{align*}
\lim_{n\to\infty}\frac{u_n}{\log n}
	&=\lim_{n\to\infty}\frac{1}{\log n}\left[u+\sum_{j=0}^{n-1}\left(\frac{1}{v_j}+\frac{u_j^\frac{k+1}{k}}{v_j^\frac{k}{k-1}}\h(u_j,v_j)+\0\left(\frac{1}{v_j^2}\right) \right)\right] \\
	&=\lim_{n\to\infty}\frac{1}{\log n}\sum_{j=0}^{n-1}\frac{1}{v_j}
	=\lim_{n\to\infty}\frac{1}{\log n}\sum_{j=0}^{n-1}
		\frac{1}{w_j}\left( 1+\phi_0(u_j)\right) \\	
	&=\lim_{n\to\infty}\frac{1}{\log n}\sum_{j=m}^{n-1}\frac{1}{j}+
		\lim_{n\to\infty}\frac{1}{\log n}\sum_{j=m}^{n-1}\frac{\phi_0(u_j)}{j}
	=1,	
\end{align*}
where we recall that $\h(u,v)=\0(1)$ and we choose $m<n-1$ large enough that we can replace $w_j$ by $j$ for $j\geq m$.  We arrive at the final equality using:
$$\log n-\log m=\int_{m-1}^{n}\frac{dx}{x}\leq \sum_{j=m}^{n-1}\frac{1}{j}\leq \int_{m-1}^{n-1}\frac{dx}{x}=\log(n-1)-\log(m-1),$$
the fact that $\phi_0(u)=\0\left(u^{-\frac{1}{k}}\right)$, and
$$\frac{1}{\log n}\sum_{j=m}^{n-1}\frac{1}{j(\log j)^\frac{1}{k}}
\leq\frac{1}{(\log n)^\frac{1}{2k}}\sum_{j=m}^{n-1}\frac{1}{j(\log j)^\frac{2k+1}{2k}}
\leq\frac{1}{(\log n)^\frac{1}{2k}}\int_{m-1}^{n-1}\frac{dx}{x(\log x)^\frac{2k+1}{2k}}
=\0\left(\frac{1}{(\log n)^\frac{1}{2k}}\right).$$
\end{proof}

\begin{prop}
\label{Fatou1}
The sequence $\left(w_n-n\right)_{n=0}^\infty$ converges uniformly on compact subsets of $\Ouv$.  Let $\omega$ be the holomorphic limit function.  Then $\omega(u_1,v_1)=\omega(u,v)+1.$
\end{prop}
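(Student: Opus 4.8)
The plan is to write $w_n-n$ as a partial sum of a telescoping series and to control its tail with the error estimates on $\epsilon_1,\epsilon_2$ already in hand, deducing uniform convergence on compacta from the Weierstrass M-test.

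First I would apply the identity $w_1=w+1+\epsilon_1(u,v)+\epsilon_2(u,v)$ at the orbit point $(u_n,v_n)$ rather than at $(u,v)$. Since $w_{n+1}=\pi_2\circ\widetilde{\psi}_1(\fa(u_n,v_n))$, this gives the recursion $w_{n+1}=w_n+1+\epsilon_1(u_n,v_n)+\epsilon_2(u_n,v_n)$, and telescoping from $w_0=w$ yields
\begin{equation*}
w_N-N=w+\sum_{n=0}^{N-1}\bigl(\epsilon_1(u_n,v_n)+\epsilon_2(u_n,v_n)\bigr).
\end{equation*}
Convergence of $(w_n-n)$ is therefore equivalent to convergence of the series $\sum_n\bigl(\epsilon_1(u_n,v_n)+\epsilon_2(u_n,v_n)\bigr)$.

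Next I would estimate the summands using the orders $\epsilon_1=\0(u^{-(k+1)/k}v^{-1})$ and $\epsilon_2=\0(u^{(2k-1)/k}v^{-k/(k-1)})$ together with Lemma~\ref{uvBound}. Fix a compact set $K\subset\Ouv$; on $K$ the quantity $\re(v)$ is bounded between two positive constants and $|u|$ is bounded, so by Lemma~\ref{uvBound} and the ensuing estimates on $|u_n|,|v_n|$ one has $|v_n|\geq\frac{n}{2}$ and $c\log n\leq|u_n|\leq C\log n$ uniformly in $(u,v)\in K$ for all large $n$. Hence
\begin{equation*}
|\epsilon_1(u_n,v_n)|\leq\frac{C'}{n(\log n)^{(k+1)/k}},\qquad
|\epsilon_2(u_n,v_n)|\leq\frac{C'(\log n)^{(2k-1)/k}}{n^{k/(k-1)}},
\end{equation*}
with constants depending only on $K$. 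The first majorant is a convergent Bertrand series because the exponent $(k+1)/k=1+\tfrac1k$ on the logarithm exceeds $1$, and the second converges because the exponent $k/(k-1)=1+\tfrac{1}{k-1}$ on $n$ exceeds $1$ (the polylogarithmic numerator being harmless). As these bounds are independent of the point of $K$, the M-test delivers uniform convergence of the series, hence of $(w_n-n)$, on $K$. Since each $w_n=\pi_2\circ\widetilde{\psi}_1\circ\fa^n$ is holomorphic on $\Ouv$, the uniform limit $\omega$ is holomorphic.

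The functional equation is then formal: from $w_n(\fa(u,v))=\pi_2\circ\widetilde{\psi}_1(\fa^{n+1}(u,v))=w_{n+1}(u,v)$ one obtains
\begin{equation*}
\omega(u_1,v_1)=\lim_{n\to\infty}\bigl(w_{n+1}(u,v)-n\bigr)=\lim_{n\to\infty}\bigl(w_{n+1}(u,v)-(n+1)\bigr)+1=\omega(u,v)+1.
\end{equation*}
The main obstacle is the $\epsilon_1$ term, whose summability is borderline: it would fail if $|u_n|$ were merely bounded below, and succeeds only because $|u_n|$ grows at least like $\log n$, so that the factor $(\log n)^{1/k}$ pushes $\sum 1/(n\log n)$ past the convergence threshold. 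The delicate part of the argument is thus extracting the uniform-in-$K$ two-sided bounds $|u_n|\asymp\log n$ and $|v_n|\gtrsim n$ from Lemma~\ref{uvBound} and confirming this Bertrand-type convergence.
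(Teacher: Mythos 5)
Your proposal is correct and follows essentially the same route as the paper: telescoping $w_n-n$ into the series $\sum_n(\epsilon_1(u_n,v_n)+\epsilon_2(u_n,v_n))$, feeding the orbit growth $|v_n|\gtrsim n$, $|u_n|\asymp\log n$ from Lemma~\ref{uvBound} into the orders of $\epsilon_1,\epsilon_2$, and concluding summability (your Bertrand-series/M-test phrasing versus the paper's explicit integral-comparison Cauchy estimate is only a cosmetic difference). The one thing the paper's version buys that yours does not is an explicit quantitative tail bound in terms of $\re(u),\re(v)$, which is reused later to obtain the estimate~\eqref{etaapprox} on $\eta$.
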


The function $\omega$ is usually referred to as a Fatou coordinate. 
\begin{proof}%[Proof of Proposition~\ref{Fatou1}]
For some $C>0$ and any $n,m\in\N$ with $n>m$,
\begin{align*}
&\big|(w_n-n)-(w_m-m)\big|
	=\Bigg| \sum_{l=m}^{n-1}\left[\epsilon_1(u_l,v_l)+\epsilon_2(u_l,v_l)\right] \Bigg|
	\leq \sum_{l=m}^{n-1}\left(
		\frac{C}{|u_l|^\frac{k+1}{k}|v_l|}+\frac{C|u_l|^\frac{2k-1}{k}}{|v_l|^\frac{k}{k-1}}\right) \\
&		\qquad\leq\sum_{l=m}^{n-1}\left[
			\frac{\frac{2C}{\re(v)}}{\left(\re(u)+\frac{1}{6}\log\left(1+\frac{l}{\re(v)}\right)\right)^\frac{k+1}{k}\left(1+\frac{l}{\re(v)}\right)}
			+\frac{C\left(\re(u)+3\log\left(1+\frac{l}{\re(v)}\right) \right)^\frac{2k-1}{k}}{\left(\re(v)+\frac{l}{2}\right)^\frac{k}{k-1}} \right]\\
&		\qquad\leq \left[-12Ck\left(\re(u)+\frac{1}{6}\log\left(1+\frac{x}{\re(v)}\right)\right)^{-\frac{1}{k}}\right]\Bigg|_{m-1}^{n-1}
			+\frac{C\left(\re(u)\right)^\frac{2k-1}{k}}{\left(\re(v)\right)^\frac{k}{k-1}}
				\int_{m-1}^{n-1}\frac{dx}{\left(1+\frac{x}{2\re(v)}\right)^{\frac{k+1}{k}}}\\
&		\qquad\leq 12Ck\left(\re(u)+\frac{1}{6}\log\left(1+\frac{m-1}{\re(v)}\right)\right)^{-\frac{1}{k}}+2Ck\frac{\re(u)^\frac{2k-1}{k}}{\re(v)^\frac{1}{k-1}}\left(1+\frac{m-1}{2\re(v)}\right)^{-\frac{1}{k}}
\end{align*}
where we used bounds from Lemma~\ref{uvBound}.  For any compact $K\subset\Ouv$, $\exists S$ such that $|u|,|v|<S, \forall (u,v)\in K$.  So
$$|(w_n-n)-(w_m-m)|
<\frac{12Ck}{\left(R+\frac{1}{6}\log\left(1+\frac{m-1}{S}\right)\right)^\frac{1}{k}}+\frac{2CkS^\frac{2k-1}{k}R^{-\frac{1}{k-1}}}{\left(1+\frac{m-1}{2S}\right)^\frac{1}{k}}.$$
Therefore $\forall\epsilon>0, \exists M\in\N$ such that $\forall n,m>M$ and $\forall(u,v)\in K$, $|(w_n-n)-(w_m-m)|<\epsilon$.  Hence, the sequence of holomorphic functions $(w_n-n)$ converges uniformly on compact subsets of $\Ouv$ to a holomorphic limit function:
$$\omega(u,v)=\lim_{n\to\infty}(w_n-n)=\lim_{n\to\infty}\sum_{j=0}^{n-1} (w_{j+1}-w_j-1)+w.$$
Finally we show that $\omega\circ\fa=\omega+1$ on $\Ouv$:
\begin{align*}
\omega(u_1,v_1) &=\sum_{j=0}^\infty \left(w_{j+2}-w_{j+1}-1\right)+w_1 \\
	&=\sum_{j=0}^\infty \left(w_{j+1}-w_{j}-1\right)+w+1 \\
	&=\omega(u,v)+1
\end{align*}
\end{proof}
Define the function $\eta$ as follows for any $(u,v)\in\Ouv$:
\begin{equation}\label{eta}
\eta(u,v):=\omega(u,v)-w=\lim_{n\to\infty}\left((w_n-n)-(w-0)\right)
\end{equation}
From the proof of Proposition~\ref{Fatou1}, we can bound $\eta(u,v)$ for any $(u,v)\in\Ouv$:
\begin{equation}\label{etaapprox}
|\eta(u,v)|\leq 12Ck\left(\re(u)-1\right)^{-\frac{1}{k}}+4Ck \re(u)^\frac{2k-1}{k}\re(v)^{-\frac{1}{k-1}}.
\end{equation}
Now we will show that the following map is a coordinate change from coordinates $(u,v)$ to $(u,\omega)$:
$$\psi_1(u,v):=(u,\omega(u,v)).$$
First we need to define a couple of domains that contain $\Ouv$.  We choose appropriate constants $0\ll R_2<R_1<R,0<2\delta<\delta_2\ll 1,\mbox{ and }0<3\theta<\frac{\pi}{4}$ so that 
$$\Ouv\subsetneq\Omega_1:=\Ouv_{R_1,2\theta,2\delta}\subsetneq\Omega_2:=\Ouv_{R_2,3\theta,\delta_2}$$
and all of these domains have satisfied the properties shown thus far for $\Ouv$. 
\begin{prop} \label{psi1}
Let
\begin{align*}
\Ouw_2 &:=\left\{(u,\omega) \ \bigg| \ \re(u)>R, |u|^\frac{(k-1)(k+1)}{k}<\delta|\omega|, |\Arg (u)|<\theta, |\Arg (\omega)|<\frac{k-1}{k}\theta \right\} \\
\Ouv_2 &:=\Omega_1\cap\psi_1^{-1}\left(\Ouw_2\right)=\Ouv_{R_1,2\theta,2\delta}\cap\psi_1^{-1}\left(\Ouw_2\right)
\end{align*}
Then $\psi_1:\Ouv_2\to\Ouw_2$ is a biholomorphism.\end{prop}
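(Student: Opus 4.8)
The plan is to exploit that $\psi_1$ has triangular form: its first component is the identity in $u$, so it is a biholomorphism onto $\Ouw_2$ exactly when (i) its complex Jacobian, which equals $\partial_v\omega$, is nowhere zero (i.e.\ $\psi_1$ is a local biholomorphism), (ii) for each fixed $u$ the slice map $v\mapsto\omega(u,v)$ is injective, and (iii) every $(u,\omega_0)\in\Ouw_2$ is attained by some $(u,v)\in\Ouv_2$. Holomorphicity of $\psi_1$ is immediate from Proposition~\ref{Fatou1}, and by Osgood's theorem injectivity already forces the inverse to be holomorphic, so the content lies in these three items. Throughout I would use that all bounds established for $\Ouv$ persist on $\Omega_1$ and $\Omega_2$, as arranged just before the statement.

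For (i) I would write $\omega=w+\eta$ as in~\eqref{eta}, with $w=v\bigl[1+\sum_{j=0}^{k-1}\phi_j(u)v^{-j/(k-1)}\bigr]$. Differentiating the explicit expression for $w$ gives $\partial_v w=1+\phi_0(u)+\sum_{j=1}^{k-2}\bigl(1-\tfrac{j}{k-1}\bigr)\phi_j(u)v^{-j/(k-1)}$, the $j=k-1$ term dropping out. Setting $\mu(u):=1+\phi_0(u)$ and using $\phi_0(u)=\0(u^{-1/k})$ together with the remaining bounds of Proposition~\ref{existsdiffeqsol} and the domain constraint $|u|^{(k-1)(k+1)/k}<\delta|v|$, every term after $\mu(u)$ is small, so $\partial_v w=\mu(u)+\lo(1)$. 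To control $\partial_v\eta$ I would invoke the Cauchy-estimate mechanism of Proposition~\ref{Shrink} and Remark~\ref{Ouv}, now applied in the $v$-variable: since $\eta$ is holomorphic on the larger domain $\Omega_2$ and bounded there by~\eqref{etaapprox}, a Cauchy estimate on a disk of radius comparable to $|v|$ gives $\partial_v\eta=\lo(1)$. Hence $\partial_v\omega=\mu(u)+\lo(1)\neq0$ for $R$ large and $\delta,\theta$ small, establishing (i).

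The genuine obstacle is the global injectivity (ii): each $u$-slice of $\Omega_1$ is a narrow sector with a large disk removed, hence not convex, so the Noshiro--Warschawski test cannot be applied directly to $v\mapsto\omega(u,v)$. The device I would use is to pass to the logarithmic coordinate $s=\log v$, under which the slice becomes the half-strip $\{\,\re(s)>\log\rho(u),\ |\im(s)|<\tfrac{k-1}{k}2\theta\,\}$, which is convex. The same estimates as in (i) show $\omega/v=\mu(u)+\lo(1)$, so on this slice $\omega$ has small argument and $\log\omega$ is well defined; moreover $\frac{d}{ds}\log\omega(u,e^{s})=\dfrac{v\,\partial_v\omega}{\omega}=\dfrac{\partial_v\omega}{\omega/v}\to1$, so its real part exceeds $\tfrac12$ once $R$ is large and $\delta,\theta$ small. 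By Noshiro--Warschawski on the convex half-strip, $s\mapsto\log\omega(u,e^{s})$ is injective; since $v\mapsto\log v$ and $\omega\mapsto\log\omega$ are injective on the relevant sectors, $v\mapsto\omega(u,v)$ is injective on the entire slice, and as distinct $u$ produce distinct first coordinates, $\psi_1$ is injective on all of $\Omega_1$, a fortiori on $\Ouv_2$.

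Finally, for (iii) fix $(u,\omega_0)\in\Ouw_2$; by the definition $\Ouv_2=\Omega_1\cap\psi_1^{-1}(\Ouw_2)$ it suffices to produce $v$ with $(u,v)\in\Omega_1$ and $\omega(u,v)=\omega_0$. Writing the equation as a fixed-point problem $v=\mu(u)^{-1}\bigl(\omega_0-[\omega(u,v)-\mu(u)v]\bigr)=:T(v)$, the bracket is $\lo(v)$ with $\partial_vT=-\mu(u)^{-1}(\partial_v\omega-\mu(u))=\lo(1)$, so $T$ is a contraction on a closed disk about $\mu(u)^{-1}\omega_0$; since $\Ouw_2$ is built from the constants $R,\delta,\theta$ whereas $\Omega_1$ uses the strictly larger $R_1<R$, $2\theta$, $2\delta$, that disk sits inside the $u$-slice of $\Omega_1$, and the contraction yields the desired $v$. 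Thus $\psi_1(\Ouv_2)=\Ouw_2$, and combined with (i) and (ii) this shows $\psi_1\colon\Ouv_2\to\Ouw_2$ is a biholomorphism. The step I expect to be the crux is (ii): everything else is a perturbation of the identity, whereas (ii) is precisely where the non-convexity of the slices forces the logarithmic change of variable before the univalence criterion can be used.
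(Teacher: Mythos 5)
Your proposal is correct, but it takes a genuinely different route from the paper. The paper dispatches injectivity and surjectivity simultaneously with Rouch\'e's theorem: for fixed $u_0$ and a target value $y$ it compares $g(v)=\omega(u_0,v)-y$ with $h(v)=v-y$ on the boundary of the $u_0$-slice of $\Omega_1$ truncated at $|v|=2|y|$, proves $|g-h|<|h|$ there using only the sup-norm bound $|\omega(u_0,v)-v|<c\,|v|\,|u_0|^{-1/k}$ coming from~\eqref{OGphi} and~\eqref{etaapprox}, and concludes that $g$ has exactly one zero because $h$ does (plus a separate remark ruling out preimages with $|v|\geq 2|y|$). You instead split the two tasks: univalence of each slice map via Noshiro--Warschawski after the substitution $s=\log v$, which convexifies the slice into a half-strip, and surjectivity via a contraction-mapping fixed point. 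Both arguments go through, and your diagnosis of the obstruction is accurate --- the slices are non-convex, so a univalence criterion cannot be applied to $v\mapsto\omega(u,v)$ directly, and the logarithmic coordinate (respectively, the argument principle in the paper) is precisely what circumvents this. The trade-off: your route requires the derivative estimate $\partial_v\eta=\lo(1)$, which you correctly propose to extract by a Cauchy estimate in $v$ on the enlarged domain $\Omega_2$ in the spirit of Proposition~\ref{Shrink} and Remark~\ref{Ouv} (the paper never needs to differentiate $\eta$), whereas the paper's route requires the contour bookkeeping on the three boundary segments and the cutoff at $2|y|$. One small economy: once injectivity is established, Osgood's theorem (which you invoke) already makes $\psi_1$ a biholomorphism onto its image, so your step (i) is not logically necessary --- though the estimate $\partial_v\omega=1+\phi_0(u)+\lo(1)$ is reused in your steps (ii) and (iii), so the computation is not wasted.
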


\begin{proof}  
First of all, $\psi_1$ is holomorphic on $\Omega_2$ since it is in each component.  Now we want to find a domain on which $\psi_1$ is injective. 
For any $(u,v),(\hat{u},\hat{v})\in\Ouv$,
$$\psi_1(u,v)=\psi_1(\hat{u},\hat{v})\Leftrightarrow
u=\hat{u}\mbox{ and }\omega(u,v)=\omega(\hat{u},\hat{v}).$$
Fix $u_0\in\Ou$ and let $\omega_{u_0}(v):=\omega(u_0,v)$.  Define
\begin{align*}
\Omega_{2,u_0}&:=\{v\in\C \ | \ (u_0,v)\in\Omega_2\} \\
\Omega_{1,u_0}&:=\{v\in\C \ | \ (u_0,v)\in\Omega_1\} \\
\Omega_{u_0}&:=\left\{v\in\C \ | \ (u_0,v)\in\Ouv\right\} 
\end{align*}
Then $\omega_{u_0}$ is holomorphic on $\Omega_{2,u_0}$.  Fix any $y\in\Omega_{u_0}$.  For any $v\in\Omega_{2,u_0}$, let 
$$g(v):=\omega_{u_0}(v)-y\qquad\mbox{ and }\qquad h(v):=v-y.$$
Then $g,h$ are holomorphic on $\Omega_{2,u_0}$.  Let $\gamma$ be the curve that is the boundary of the region:
$$\Omega_{1,u_0}\cap \{v\in\C \ | \ |v|<2|y|\}=\left\{v\in\C \ \bigg| \ (2\delta)^{-1}|u_0|^\frac{k^2-1}{k}<|v|<2|y|\mbox{ and }|\Arg(v)|<2\theta\right\}.$$
The point $y$ lies inside the region bounded by the curve $\gamma$.  Note that:
$$|g(v)-h(v)| 
	=\Bigg|\sum_{j=0}^{k-1}\frac{\phi_j(u_0)}{v^{\frac{j}{k-1}}}+\frac{\eta(u_0,v)}{v}\Bigg||v|
	<\frac{c|v|}{|u_0|^\frac{1}{k}}$$
for some constant $c>0$.  We want to show that on $\gamma$:
$$|g(v)-h(v)|<|h(v)|,\mbox{ which we can prove by showing that }\frac{c}{|u_0|^\frac{1}{k}}<\frac{|h(v)|}{|v|}=\frac{|v-y|}{|v|}.$$
We bound $\frac{|h(v)|}{|v|}$ from below on the segments of $\gamma$: 
$$\mbox{(1) }|v|=(2\delta)^{-1}|u_0|^\frac{k^2-1}{k},\qquad\mbox{ (2) }|v|=2|y|,\qquad\mbox{ (3) }|\Arg(v)|=2\theta.$$  
On the first segment of $\gamma$:
$$|h(v)|=|v-y|\geq |y|-|v|>\left(\delta^{-1}-(2\delta)^{-1}\right)|u_0|^\frac{k^2-1}{k}=(2\delta)^{-1}|u_0|^\frac{k^2-1}{k}=|v|$$

On the second segment of $\gamma$:
$$|h(v)|=|v-y|\geq|v|-|y|=|y|=\frac{1}{2}|v|.$$

On the third segment of $\gamma$, $|\Arg(v)|=2\theta$.  Fix any $v$ on this segment and without loss of generality assume $\Arg(v)=2\theta$.  The distance $|v-y|$ is greater than the shortest distance from $v$ to the line of angle $\theta$ from the origin, 
therefore
$$|h(v)|=|v-y|\geq|v|\sin(\theta).$$

Hence on $\gamma$ we have:
$$\frac{|h(v)|}{|v|}=\frac{|v-y|}{|v|}\geq\min\bigg\{1,\frac{1}{2},\sin\theta\bigg\}=\sin\theta.$$
By requiring that $R>\left(\frac{c}{\sin\theta}\right)^k$, we get
$$|g(v)-h(v)|<|h(v)|=|v-y|\mbox{ on }\gamma.$$  
From this inequality, we know that neither $g(v)$ nor $h(v)$ has a zero on $\gamma$.  Since the region $\Omega_{2,u_o}$ contains the closed curve $\gamma$ and $g,h$ are holomorphic on $\Omega_{2,u_o}$ with no zeros on $\gamma$, we can extend the closed, connected set bounded by $\gamma$ to a region slightly larger that contains no extra zeros of $g$ or $h$. By Rouch\'{e}'s theorem, $g$ and $h$ have the same number of zeros on this region.  Since $h(v)=v-y$ has exactly one zero in this region, $g(v)$ must as well.  Note that if $v\in\Omega_{1,u_o}$ and $|v|\geq 2|y|$, then it is not possible for $\omega(u_o,v)=y$ (we can see this from the calculation for $y$ in terms of $v$).  Therefore $g$ is injective on $\Omega_{1,u_0}$ for any $u_0\in\Ou$ and so $\psi_1$ is injective on $\Omega_1$.  Furthermore, $\forall (u_o,y)\in\Ouw_2$ we know that $(u_o,y)\in\Ouv$ and so there exists a unique element $(u_o,v)\in\Omega_1$ such that $\psi_1(u_o,v)=(u_o,y)$.  Therefore, $\psi_1:\Ouv_2 \to\Ouw_2$ is a biholomorphism.
\end{proof}

\section{Conjugacy to Translation}
In this section we make a coordinate change so that composition with $f$ acts as the identity map on the first component.  We can re-write \eqref{u} and \eqref{eta} for any $(u,v)\in\Ouv$ as:
\begin{align}
\omega &= w+\eta(u,v) = v\left(1+\sum_{l=0}^{k-1}\frac{\phi_l(u)}{v^\frac{l}{k-1}}+\frac{\eta(u,v)}{v}\right) \label{omega} \\
\frac{1}{v} &= \frac{1}{\omega}\left(1+\sum_{l=0}^{k-1}\frac{\phi_l(u)}{v^\frac{l}{k-1}}+\frac{\eta(u,v)}{v}\right) =\0\left(\frac{1}{\omega}\right) \notag \\
u_1 &= u+\frac{1}{v}+\frac{u^\frac{k+1}{k}}{v^\frac{k}{k-1}}\h(u,v)+\0\left(\frac{1}{v^2}\right) \notag
\end{align}
where $h=\0(1)$.  Let $\fb:=\psi_1\circ\fa\circ\psi_1^{-1}$.  Then for any $(u,\omega)\in\Ouw_2$, we can express $\fb(u,\omega)=(u_1,\omega_1)$ as:
\begin{align}
u_1 &=u+\frac{1}{\omega}+\frac{\phi_0(u)}{\omega}+\sum_{l=1}^{k-1}\0\left(\frac{\phi_l(u)}{\omega^\frac{l+k-1}{k-1}}\right)+\0\left(\frac{u^\frac{k+1}{k}}{\omega^\frac{k}{k-1}},\frac{1}{\omega^2}\right) \label{u(w)}\\
	&=u+\frac{1}{\omega}+\frac{\phi_0(u)}{\omega}+\0\left(\frac{u^\frac{k+1}{k}}{\omega^\frac{k}{k-1}}\right) 	\notag \\
\omega_1 &= \omega+1 \label{w},
\end{align}
where we use~\eqref{OGphi} and ~\eqref{etaapprox} to bound $\{\phi_l\}$ and $\eta$.  By employing the same techniques as in the proof of Lemma~\ref{Ouvinvariant}, we can show that if $(u,\omega)\in\Ouw_2$, then $(u_1,\omega_1)\in\Ouw_2$.  Since $\psi$ is surjective, if $(u,\omega)\in\Ouw_2$, then $\exists (u,v)\in\Ouv_2$ such that $\psi_1(u,v)=(u,\omega)$ and $\psi_1(u_1,v_1)=(u_1,\omega_1)$.  We can use \eqref{omega} to roughly bound $\re v$ by $\re\omega$: $\frac{\re\omega}{2}<\re v<2\re\omega$.  Combining this with \eqref{uineq}, we derive the inequality:
\begin{equation}\label{uwbound}
3\log\left(1+\frac{2n}{\re(\omega)}\right)\geq
\re(u_n)-\re(u) \geq 
\frac{1}{6}\log\left(1+\frac{n}{2\re(\omega)}\right).
\end{equation}
\begin{prop}\label{Fatou2}
Fix some $u_0\in\pi_1\left(\Ouw_2\right)$ and let $1\leq l\leq k$.  Define:
\begin{equation} \label{alpha}
\alpha_l(u):=\int_{u_0}^u \phi_0^l(\zeta)d\zeta,\qquad \alpha(u):=\sum_{l=1}^k (-1)^l\alpha_l(u),\quad\mbox{ and }\quad t_n:=u_n-\log \omega_n+\alpha(u_n)
\end{equation}
for any $(u,\omega)\in\Ouw_2$ and $n\in\N$.  Then the sequence $\left(t_n\right)_{n=0}^\infty$ converges uniformly on compact subsets of $\Ouw_2$.  Let $\tau$ be the holomorphic limit function of the sequence.  Then $\tau(u_1,\omega_1)=\tau(u,\omega).$
\end{prop}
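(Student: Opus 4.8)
The plan is to prove uniform convergence on compact subsets of $\Ouw_2$ by showing that the telescoping series $\sum_{n=0}^\infty(t_{n+1}-t_n)$ converges; the invariance $\tau\circ\fb=\tau$ will then follow from a shift-of-orbit argument. By \eqref{w} we have $\omega_n=\omega+n$, and by \eqref{uwbound} together with Lemma~\ref{uvBound} the orbit grows like $u_n=\0(\log n)$ with $\re(u_n)$ bounded below by a positive multiple of $\log n$, while $\omega_n\sim n$; these are the rates that control every estimate below.

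First I would expand $t_{n+1}-t_n$ term by term. Write $\Delta_n:=u_{n+1}-u_n$, so that \eqref{u(w)} gives $\Delta_n=\frac{1}{\omega_n}\left(1+\phi_0(u_n)\right)+\0\left(\frac{u_n^{(k+1)/k}}{\omega_n^{k/(k-1)}}\right)$. A second-order Taylor expansion yields $\alpha(u_{n+1})-\alpha(u_n)=\alpha'(u_n)\Delta_n+\frac12\alpha''(\hat u)\Delta_n^2$ for some $\hat u$ on the segment between $u_n$ and $u_{n+1}$, and $\log\omega_{n+1}-\log\omega_n=\log\left(1+\omega_n^{-1}\right)=\omega_n^{-1}+\0\left(\omega_n^{-2}\right)$. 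The purpose of the definition of $\alpha$ is the following geometric-series identity: since $\alpha'(u)=\sum_{l=1}^k(-1)^l\phi_0^l(u)$ is the degree-$k$ truncation of $\frac{-\phi_0}{1+\phi_0}$, and $\phi_0=\0(u^{-1/k})$ by \eqref{OGphi}, we have $1+\alpha'(u)=\frac{1}{1+\phi_0(u)}+\0\left(\phi_0^{k+1}\right)$, whence $\left(1+\phi_0(u)\right)\left(1+\alpha'(u)\right)=1+\0\left(u^{-(k+1)/k}\right)$.

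Consequently the leading $\omega_n^{-1}$ contributions cancel exactly. Combining the three expansions, using $1+\alpha'=\0(1)$ and $\alpha''(\hat u)=\0\left(u_n^{-(k+1)/k}\right)$ (from \eqref{OGphi} together with $|\hat u|\geq c|u_n|$), one finds $t_{n+1}-t_n=\Delta_n\left(1+\alpha'(u_n)\right)-\omega_n^{-1}+\tfrac12\alpha''(\hat u)\Delta_n^2+\0\left(\omega_n^{-2}\right)=\0\left(\frac{1}{u_n^{(k+1)/k}\omega_n}\right)+\0\left(\frac{u_n^{(k+1)/k}}{\omega_n^{k/(k-1)}}\right)+\0\left(\frac{1}{\omega_n^2}\right)$. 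Inserting $u_n=\0(\log n)$ (bounded below by a multiple of $\log n$) and $\omega_n\sim n$, the three terms are bounded respectively by constant multiples of $\frac{1}{n(\log n)^{(k+1)/k}}$, $\frac{(\log n)^{(k+1)/k}}{n^{k/(k-1)}}$, and $\frac{1}{n^2}$, each of which gives a convergent series: the latter two since $k/(k-1)>1$, and the first precisely because $(k+1)/k>1$. This is exactly why $\alpha$ is truncated at degree $k$ — a shorter truncation would leave a term of order $u_n^{-j/k}$ with $j\leq k$, producing the divergent series $\sum n^{-1}(\log n)^{-j/k}$. As in the proof of Proposition~\ref{Fatou1}, on any compact $K\subset\Ouw_2$ the quantities $|u|,|\omega|$ are bounded, so these estimates are uniform in $(u,\omega)\in K$ and a Cauchy-criterion argument shows $(t_n)$ converges uniformly on $K$ to a holomorphic function $\tau$.

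Finally, the functional equation is immediate: the orbit of $\fb(u,\omega)=(u_1,\omega_1)$ is that of $(u,\omega)$ re-indexed by one, so $t_n(u_1,\omega_1)=u_{n+1}-\log\omega_{n+1}+\alpha(u_{n+1})=t_{n+1}(u,\omega)$, and letting $n\to\infty$ gives $\tau(u_1,\omega_1)=\tau(u,\omega)$; the subtracted $\log\omega_n$ is precisely what absorbs the index shift, so there is no ``$+1$''. The main obstacle is the cancellation in the middle two paragraphs: one must recognize that the defining sum for $\alpha$ is the truncated expansion of $\frac{-\phi_0}{1+\phi_0}$, making $\left(1+\phi_0\right)\left(1+\alpha'\right)=1+\0\left(\phi_0^{k+1}\right)$, and then verify that the resulting order $u_n^{-(k+1)/k}$ is sharp enough — given only the logarithmic growth of $u_n$ — to keep the telescoping series summable. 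The remaining steps are routine bookkeeping with orders and the Cauchy criterion.
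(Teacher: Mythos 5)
Your proof is correct and follows essentially the same route as the paper: the same telescoping identity $\bigl(1+\phi_0\bigr)\sum_{l=0}^{k}(-1)^l\phi_0^l=1+(-1)^k\phi_0^{k+1}$ is the heart of the cancellation, the same Taylor expansion of $\alpha$ along the orbit with $|\hat u|\geq c|u_n|$ controls the second-order term, and the same growth rates $u_n=\0(\log n)$, $\omega_n\sim n$ yield the summable bounds $\0\bigl(n^{-1}(\log n)^{-(k+1)/k}\bigr)+\0\bigl((\log n)^{(k+1)/k}n^{-k/(k-1)}\bigr)$. The only cosmetic difference is that you sum $t_{n+1}-t_n$ and cite convergence of the comparison series, whereas the paper bounds $|t_n-t_m|$ directly by explicit integral comparison to get a quantitative Cauchy estimate uniform on compacts.
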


\begin{proof}For any $(u,\omega)\in\Ouw_2$ and $j\in\N$, we use Taylor series expansion as in~\eqref{phij} to get:
\begin{align*}
\int_{u_j}^{u_{j+1}} \phi_0^l(\zeta)d\zeta 
	&=\phi_0^l(u_j)(u_{j+1}-u_j)+\frac{(u_{j+1}-u_j)^2}{2}l\phi_0^{l-1}\left(\hat{u}\right)\phi_0'\left(\hat{u}\right) \\
	&=\phi_0^l(u_j)\left(\frac{1}{\omega_j}+\frac{\phi_0(u_j)}{\omega_j}\right)+\0\left(\frac{u_j^\frac{k+1}{k}}{\omega_j^\frac{k}{k-1}}\right)
\end{align*}
where $\hat{u}$ is some point on the line between $u_j$ and $u_{j+1}$, which may vary depending on $j,l$.  Since $\hat{u}$ is on the line segment between $u_j$ and $u_{j+1}$, $\hat{u}\in\pi_1\left(\Ouv_2\right)$, $\phi_0(\hat{u})$ is defined, and $C|u_j|>|\hat{u}|>c|u_j|$ for some constants $C>c>0$.  We used this along with~\eqref{OGphi} to bound the terms involving $\hat{u}$.  For any $n,m\in\N$ with $n>m$,
\begin{align*}
u_n-u_m
	&=\sum_{j=m}^{n-1}\left(\frac{1}{\omega_j}+\frac{\phi_0(u_j)}{\omega_j}+\0\left(\frac{u_j^\frac{k+1}{k}}{\omega_j^\frac{k}{k-1}}\right)\right) \\
\alpha(u_n)-\alpha(u_m)
	&=\sum_{j=m}^{n-1} \left(\alpha(u_{j+1})-\alpha(u_j)\right)
	=\sum_{j=m}^{n-1} \sum_{l=1}^k (-1)^l \int_{u_j}^{u_{j+1}}\phi_0^l(\zeta)d\zeta \\
	&=\sum_{j=m}^{n-1}\sum_{l=1}^k (-1)^l \left(\phi_0^l(u_j)\left(\frac{1}{\omega_j}+\frac{\phi_0(u_j)}{\omega_j}\right)+\0\left(\frac{u_j^\frac{k+1}{k}}{\omega_j^\frac{k}{k-1}}\right) \right)
\end{align*}
Now we  bound $|t_n-t_m|$ using the previous equations:
\begin{align*}
\big|t_{n}-t_m\big|
	&=\Big| (u_n-u_m)-(\log \omega_n-\log \omega_m)+(\alpha(u_n)-\alpha(u_m)) \Big| \\
	&\leq \Bigg| \sum_{j=m}^{n-1}\left(
		\sum_{l=0}^k (-1)^l \phi_0^l(u_j)\left(\frac{1}{\omega_j}+\frac{\phi_0(u_j)}{\omega_j}\right)
		-\log\left(\frac{\omega_{j+1}}{\omega_j}\right)
		 \right) \Bigg|+c\sum_{j=m}^{n-1}\frac{|u_j|^\frac{k+1}{k}}{|\omega_j|^\frac{k}{k-1}} \\
	&\leq \Bigg| \sum_{j=m}^{n-1}\left(
		\frac{1}{\omega_j}
		-\log\left(1+\frac{1}{\omega_j}\right) \right) \Bigg|
	+c\sum_{j=m}^{n-1}\left(
		\frac{|u_j|^\frac{k+1}{k}}{|\omega_j|^\frac{k}{k-1}}
		+\frac{1}{|u_j|^\frac{k+1}{k}|\omega_j|}\right) \\
	&\leq c\sum_{j=m}^{n-1}\left(
		\frac{|u_j|^\frac{k+1}{k}}{|\omega_j|^\frac{k}{k-1}}
		+\frac{2}{|u_j|^\frac{k+1}{k}|\omega_j|}\right)  \\
	&\leq \frac{2c\re(u)^\frac{k+1}{k}}{\re(\omega)^\frac{k}{k-1}}\sum_{j=m}^{n-1}
		\frac{\left(1+\frac{3}{\re(u)}\log\left(1+\frac{2j}{\re(\omega)}\right)\right)^\frac{k+1}{k}}{\left(1+\frac{j}{\re(\omega)}\right)^\frac{k}{k-1}} \\
&		\qquad+2c\sum_{j=m}^{n-1}\frac{1}{\left(\re(u)+\frac{1}{6}\log\left(1+\frac{j}{2\re(\omega)}\right)\right)^\frac{k+1}{k}\left(1+\frac{j}{2\re(\omega)}\right)\re(\omega)}  \\
	&\leq \frac{2c\re(u)^\frac{k+1}{k}}{\re(\omega)^\frac{k}{k-1}}\int_{m-1}^{n-1}\left(1+\frac{x}{\re(\omega)}\right)^{-\frac{k+1}{k}}dx
		-24k c\left(\re(u)+\frac{1}{6}\log\left(1+\frac{x}{2\re(\omega)}\right)\right)^{-\frac{1}{k}}\Bigg|_{m-1}^{n-1} \\
	&\leq 2kc\frac{\re(u)^\frac{k+1}{k}}{\re(\omega)^\frac{1}{k-1}}\left(1+\frac{m-1}{\re(\omega)}\right)^{-\frac{1}{k}}+
			24k c\left(\re(u)+\frac{1}{6}\log\left(1+\frac{m-1}{2\re(\omega)}\right)\right)^{-\frac{1}{k}}
\end{align*}
for some constant $c>1$ independent of $(u,\omega)$. For any compact $K\subset\Ouw_2, \exists S$ such that $|u|,|v|<S, \forall (u,v)\in K$.  Then
$$\big|t_{n}-t_m\big|<
 \frac{2kcS^\frac{k+1}{k}}{R^\frac{1}{k-1}}\left(1+\frac{m-1}{S}\right)^{-\frac{1}{k}}+24k c\left(R+\frac{1}{6}\log\left(1+\frac{m-1}{2S}\right)\right)^{-\frac{1}{k}}.$$

Therefore $\forall\epsilon>0, \exists M\in\N$ such that $\forall n,m>M$ and $\forall(u,v)\in K$, $|t_n-t_m|<\epsilon$.  Hence, the sequence of holomorphic functions $(t_n)$ converges uniformly on compact subsets of $\Ouw_2$ to the limit function $\tau$, which also must be holomorphic on $\Ouw_2$.  So
$$\tau(u,\omega)=\lim_{n\to\infty}t_n=\sum_{j=0}^\infty (t_{j+1}-t_j)+t.$$

Finally we show that $\tau\circ\fb=\tau$ on $\Ouw_2$:
$$\tau(u_1,\omega_1) =
	\sum_{j=0}^\infty (t_{j+2}-t_{j+1} )+t_1-\left(t-t\right) 
	=\sum_{j=0}^\infty (t_{j+1}-t_{j} )+t 
	=\tau(u,\omega)$$
\end{proof}

Now we show that the following  map is a coordinate change:
$$\psi_2(u,\omega):=(\tau(u,\omega),\omega).$$
Define the function $\mu$ as follows for any $(u,\omega)\in\Ouw_2$:
\begin{equation}\label{mu}
\mu(u,\omega):=\tau(u,\omega)-u+\log \omega-\alpha(u)=\lim_{n\to\infty}\left(t_n-t\right).
\end{equation}
We can bound $\mu(u,\omega)$ for any $(u,\omega)\in\Ouw_2$ using the proof of Proposition~\ref{Fatou2} and $m=0$:
\begin{equation}\label{mubound}
|\mu(u,\omega)|\leq 4kc\frac{\re(u)^\frac{k+1}{k}}{\re(\omega)^\frac{1}{k-1}}+\frac{24kc}{(\re(u)-1)^\frac{1}{k}}.
\end{equation}

Let
\begin{equation}\label{Ouwdef}
\Ouw_{R',\delta',\theta'}:=
\left\{(u,\omega) \ \bigg| \ \re(u)>R',|u|^\frac{(k-1)(k+1)}{k}<\delta'|\omega|,|\Arg(u)|<\theta',|\Arg(\omega)|<\frac{k-1}{k}\theta' \right\}
\end{equation}
for any $R',\delta',\theta'$.  To simplify notation, replace $(R,\delta,\theta)$ by $(R_2,\delta_2,\theta_2)$ in the preceding work so that $\Ouw_2=\Ouw_{R_2,\delta_2,\theta_2}$.  Given $(R_2,\delta_2,\theta_2)$, we choose appropriate constants $R_2<R_1<R,0<\delta<\delta_1<\delta_2,\mbox{ and }0<\theta<\theta_1<\theta_2 $ so that 
$$\Omega_0:=\Ouw_{R,\delta,\theta}\subsetneq\Omega_1:=\Ouw_{R_1,\delta_1,\theta_1}\subsetneq\Ouw_2$$
and all of these domains have satisfied the properties shown thus far for $\Ouw_2$.

\begin{prop}
\label{psi2}
Let
\begin{align*}
\Otwh &:=\left\{(\tau-\log(\omega),\omega) \ \bigg| \ \re(\tau)>R,|\tau|<\delta|\omega|^\frac{k}{(k-1)(k+1)},|\Arg(\tau)|<\theta, |\Arg(\omega)|<\frac{k-1}{k}\theta\right\} \\
\Ouwh &:=\Omega_1\cap\psi_2^{-1}\left(\Otwh\right) 
\end{align*}
Then $\psi_2:\Ouwh\to\Otwh$ is a biholomorphism onto its image.
\end{prop}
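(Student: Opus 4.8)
The plan is to follow the structure of the proof of Proposition~\ref{psi1}, adapted to the fact that now it is the \emph{first} coordinate that is modified. Since $\psi_2(u,\omega)=(\tau(u,\omega),\omega)$ leaves the second coordinate fixed, and $\tau$ is holomorphic on $\Ouw_2$ by Proposition~\ref{Fatou2}, $\psi_2$ is holomorphic; moreover two points of $\Omega_1$ share an image under $\psi_2$ precisely when they share the same $\omega$ and the same value of $\tau$. Thus the biholomorphism claim reduces to showing that, for each fixed $\omega_0\in\pi_2(\Omega_1)$, the map $u\mapsto\tau_{\omega_0}(u):=\tau(u,\omega_0)$ is injective on the slice $\{u:(u,\omega_0)\in\Omega_1\}$, together with surjectivity onto $\Otwh$.

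Writing $\tau$ via~\eqref{mu} as $\tau(u,\omega)=u+\alpha(u)-\log\omega+\mu(u,\omega)$, and recalling from~\eqref{alpha} that $\alpha'(u)=\sum_{l=1}^k(-1)^l\phi_0^l(u)=\0(u^{-1/k})$ by~\eqref{OGphi}, I would first record that $u\mapsto U:=u+\alpha(u)$ is univalent on the convex horn $\Ou$: if $u_1+\alpha(u_1)=u_2+\alpha(u_2)$ then $|u_1-u_2|=|\int\alpha'|<\tfrac12|u_1-u_2|$ for $R$ large, forcing $u_1=u_2$. In the coordinate $U$ the map $\tau_{\omega_0}=U-\log\omega_0+\mu$ is a small perturbation of the translation $U\mapsto U-\log\omega_0$, exactly as $\omega_{u_0}(v)=v+(\text{small})$ perturbed $v$ in Proposition~\ref{psi1}. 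The definition of $\Otwh$ in the coordinate $\tau-\log\omega$ is tailored so that the $-\log\omega$ inside $\tau$ is cancelled: a target $(\hat\tau-\log\omega_0,\omega_0)\in\Otwh$ has $\hat\tau$ in the horn (so $\re\hat\tau>R$, $|\hat\tau|<\delta|\omega_0|^{k/((k-1)(k+1))}$, and the argument bounds), and solving $\tau_{\omega_0}(u)=\hat\tau-\log\omega_0$ is the equation $u+\alpha(u)+\mu(u,\omega_0)=\hat\tau$. Setting $g(u):=u+\alpha(u)+\mu(u,\omega_0)-\hat\tau$ and $h(u):=u+\alpha(u)-\hat\tau$, the function $h$ has the single zero $u_\ast=(\operatorname{Id}+\alpha)^{-1}(\hat\tau)\approx\hat\tau$, $g-h=\mu$, and on a contour $\gamma$ about $u_\ast$ analogous to the three-segment curve of Proposition~\ref{psi1} (the boundary of the $u$-slice of $\Omega_1$ intersected with a disc about $u_\ast$) I would bound $|h|$ from below and invoke Rouch\'e to conclude $g$ and $h$ have the same number of zeros, namely one; injectivity and surjectivity onto $\Otwh$ follow, the unique preimage lying in $\Omega_1\cap\psi_2^{-1}(\Otwh)=\Ouwh$ by construction.

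The main obstacle is the Rouch\'e inequality $|\mu|<|h|$ on $\gamma$, which is genuinely more delicate than its counterpart in Proposition~\ref{psi1}. The term $\alpha(u)=\0(u^{(k-1)/k})$ is \emph{not} negligible---it has the same order as the displacement $u_\ast-\hat\tau=-\alpha(u_\ast)$ of the preimage from the target---so $\alpha$ must be kept inside the ``main part'' $h$, and only the honestly small correction $\mu$ may serve as the perturbation. Here one uses~\eqref{mubound} together with the horn relation $|u|^{(k-1)(k+1)/k}<\delta|\omega|$, which gives $\re(u)^{(k+1)/k}/\re(\omega)^{1/(k-1)}=\0(\delta^{1/(k-1)})$, so that $\mu=\0(\delta^{1/(k-1)})+\0(u^{-1/k})$ is uniformly small. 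I would then choose the radius of $\gamma$ to exceed $\sup_\gamma|\mu|$ while keeping $\gamma$ inside the $u$-slice of $\Omega_1$---possible because $\hat\tau$ is taken in the innermost horn $\Omega_0$, $u_\ast\approx\hat\tau$ sits deep in the horn, and the nested domains $\Omega_0\subset\Omega_1\subset\Ouw_2$ arranged before the statement leave room (as in Remark~\ref{Ouv})---and on the segments of $\gamma$ estimate $|h|$ from below using that $\operatorname{Id}+\alpha$ is univalent and close to the identity, and $|\mu|$ from above via~\eqref{mubound}; the inequality is then secured by taking $R$ large and $\delta$ small. Surjectivity is automatic, since Rouch\'e produces exactly one zero for every admissible target.
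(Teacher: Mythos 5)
Your overall strategy is the paper's: freeze $\omega=\omega_0$, reduce to injectivity and surjectivity of the one-variable map $u\mapsto\tau(u,\omega_0)$ on the $u$-slices, and run a Rouch\'e argument on the boundary of the slice of $\Omega_1$ around a target taken from the smaller slice of $\Omega_0$, using the bounds $\alpha(u)=\0\left(u^{(k-1)/k}\right)$ and~\eqref{mubound}. Where you diverge is the splitting into main term and perturbation, and your stated reason for diverging is mistaken. You claim that because $\alpha$ is unbounded it \emph{must} be absorbed into $h$; in fact the paper takes $h(u)=u-y$ and puts $\alpha+\mu$ entirely into the perturbation, with $|g-h|=|\mu+\alpha|<c|u|^{1-1/k}$. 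This works because the contour is the boundary of the \emph{full} slice $\Omega_{1,\omega_0}$ while the target lies in the strictly smaller slice of $\Omega_0$, so on each of the three boundary segments $|h(u)|=|u-y|$ is bounded below by $|u|^{1-1/(k^2-1)}$ (a gap of order $|u|$ up to constants coming from $R_0>\sqrt{2}R_1$, $\delta_0<\delta_1$, $\theta_0<\theta_1$), and $1-\frac{1}{k^2-1}>1-\frac{1}{k}$ for $k\ge 2$. So no univalence of $\Id+\alpha$ and no auxiliary change of variable are needed.

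Your variant is salvageable but, as written, has a gap exactly where you tried to save work. Taking $h(u)=u+\alpha(u)-\hat\tau$ makes the Rouch\'e inequality itself trivial (on a small circle about $u_*$ one has $|h|\ge\tfrac12|u-u_*|>\sup|\mu|$ by the derivative bound on $\alpha$), but it transfers the entire burden to the assertion that $h$ \emph{has} a zero $u_*$ inside your contour. Injectivity of $\Id+\alpha$ gives uniqueness of $u_*$, not existence: you must show that $\hat\tau$ lies in the image of the $\Omega_1$-slice under $\Id+\alpha$, i.e.\ that the displacement $\alpha(u)=\0\left(u^{(k-1)/k}\right)$, which is unbounded, does not push the preimage across the boundary of $\Omega_{1,\omega_0}$ --- in particular across the segment $\re(u)=R_1$, whose distance from the $\Omega_0$-slice is governed only by the gap $R_0-R_1$. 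Verifying this is precisely the comparison of $|\alpha|$ against the widths of the nested slices that the paper's one-step Rouch\'e already performs (and the cleanest fix is a second Rouch\'e, between $u+\alpha(u)-\hat\tau$ and $u-\hat\tau$ on $\partial\Omega_{1,\omega_0}$, at which point you may as well do it in one step as the paper does). Until that step is supplied, surjectivity onto $\Otwh$ --- and with it the bijectivity claim --- is not established.
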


\begin{proof}
We use a similar strategy as in the proof of Proposition~\ref{psi1}.  $\psi_2$ is holomorphic on $\Ouw_2$ since it is in each component.  For any $(u,w),(\hat{u},\hat{w})\in\Ouw_2$,
$$\psi_2(u,w)=\psi_2(\hat{u},\hat{w})\Leftrightarrow \tau(u,w)=\tau(\hat{u},\hat{w})\mbox{ and }w=\hat{w}.$$
Fix $w_0\in\pi_2\left(\Omega_0\right)$ and let $\tau_{w_0}(u):=\tau(u,w_0)$.  Define
\begin{align*}
\Omega_{2,w_0} &:= \left\{ u\in\C \ | \ (u,w_0)\in \Ouw_2 \right\} \\
\Omega_{1,w_0} &:= \{ u\in\C \ | \ (u,w_0)\in \Omega_1 \} \\
\Omega_{0,w_0} &:= \{ u\in\C \ | \ (u,w_0)\in \Omega_0 \} 
\end{align*}
Then $\tau_{w_0}$ is holomorphic on $\Omega_{2,w_0}$.  Fix any $y\in\Omega_{0,w_0}$.  Let
$$g(u):=\tau_{w_0}(u)+\log(w_0)-y\qquad\mbox{ and }\qquad h(u):=u-y$$
Then $g,h$ are holomorphic on $\Omega_{2,w_0}$.  Let $\gamma$ be the curve that is the boundary of the region:
$$ \Omega_{1,w_0} = \left\{ u\in\C \ \Big| \re(u)>R_1, |u|<\left(\delta_1 |w_0|\right)^\frac{k}{(k-1)(k+1)}, |\Arg(u)|<\theta_1 \right\}.$$
The point $y$ lies inside the curve $\gamma$.  Using~\eqref{OGphi}, we bound $\alpha$:
$$\alpha(u)=\sum_{l=1}^{k} (-1)^l \int_{u_0}^u \phi_0^l(\zeta)d\zeta=\0\left(\int_{u_0}^u \phi_0(\zeta)d\zeta\right)=\0\left(u^\frac{k-1}{k}\right).$$
Then for $u\in\Omega_{2,w_0}$, using this bound and the one on $\mu$ given in~\eqref{mubound}, we get:
$$|g(u)-h(u)|=|\mu(u,w_0)+\alpha(u)|<c |u|^{1-\frac{1}{k}}<|u|^{1-\frac{1}{k^2-1}},$$
for some $c>0$ and $R_2$ large enough.  We want to show that on $\gamma$:
$$|g(u)-h(u)|<|h(u)|.$$
We bound $|h(u)|$ from below on the segments of $\gamma$:
$$\mbox{(1) }\re(u)=R_1,\qquad \mbox{ (2) }|u|=\left(\delta_1 |w_0|\right)^\frac{k}{(k-1)(k+1)},\qquad\mbox{ (3) } |\Arg(u)|=\theta_1.$$
On the first segment of $\gamma$:
$$|h(u)|\geq\left(\frac{|y|}{|u|}-1\right)|u|>\left(\frac{R_0}{\sqrt{2}R_1}-1\right)|u| \geq |u|^{1-\frac{1}{k^2-1}},$$
where the last inequality follows if we assume that $R_0 \geq \sqrt{2}R_1\left(1+R_1^{-\frac{1}{k^2-1}}\right).$  \\
On the second segment of $\gamma$:
$$|h(u)|\geq |u|-|y|>|u|\left(1-\left(\frac{\delta_0}{\delta_1}\right)^\frac{k}{k^2-1}\right)>|u|^{1-\frac{1}{k^2-1}},$$
where the last inequality follows if we assume that $R_1\geq\left(1-\left(\frac{\delta_0}{\delta_1}\right)^\frac{k}{k^2-1}\right)^{-(k^2-1)}$. \\
On the third segment of $\gamma$:
$$|h(u)|=|u-y|>|u| \sin(\theta_1-\theta_0) \geq |u|^{1-\frac{1}{k^2-1}},$$
where the last inequality follows if we assume that $R_1\geq \left(\sin(\theta_1-\theta_0)\right)^{-(k^2-1)}$. \\
Given $\{\theta_j,\delta_j\}_{0\leq j\leq 2}$, we can choose $\{R_j\}_{0\leq j\leq 2}$ large enough.  Hence
$$|h(u)|>|g(u)-h(u)|\mbox{ on }\gamma.$$
From this inequality, we know that neither $g$ nor $h$ has a zero on $\gamma$.  Since the region $\Omega_{2,w_0}$ contains the closed curve $\gamma$ and both $g$ and $h$ are holomorphic on $\Omega_{2,w_0}$ with no zeros on $\gamma$, we can extend the closed, connected set bounded by $\gamma$ to a region slightly larger that contains no extra zeros of $g$ or $h$.  By Rouch\'{e}'s theorem, $g$ and $h$ have the same number of zeros on this region; $h$ has exactly one zero in this region, hence so does $g$.  So $\forall (y,w_0)\in\Omega_0$, there is a unique element $u\in\Omega_{1,w_0}$ such that $\tau(u,w_0)=y-\log(w_0)$.  Consequently, for any $(y-\log(w_0),w_0)\in\Otwh$ there is a unique $u\in\Omega_{1,w_0}$ such that $\psi_2(u,w_0)=(\tau(u,w_0),w_0)=(y-\log(w_0),w_0)$.  
Therefore $\psi_2:\Ouwh\to\Otwh$ is a biholomorphism.
\end{proof}
Let 
\begin{align}
\Ouvh &:=\psi_1^{-1}\left(\Ouwh\right),
&\Oxyh &:=\psi_0^{-1}\left(\Ouvh\right), 
&\Omega&:=l\left(\Oxyh\right)
&&\label{Omega} \\
\Psi&:=\psi_2\circ\psi_1\circ\psi_0,&\mbox{ and }&&\fc&:=\Psi\circ\fo\circ\Psi^{-1}.\label{Psi}
\end{align}
\begin{lem}$\fc\left(\Otwh\right)\subseteq\Otwh$ and $\fo\left(\Oxyh\right)\subseteq\Oxyh$.\end{lem}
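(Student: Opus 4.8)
The plan is to exploit the fact that, in the $(\tau,\omega)$ coordinates, $\fc$ is \emph{literally} a translation, and then to transport the invariance back to the $(x,y)$ coordinates along the conjugacy $\Psi$. First I would record that $\fc(\tau,\omega)=(\tau,\omega+1)$ on $\Otwh$. From the definitions of $\fa,\fb$ one has $\fc=\psi_2\circ\fb\circ\psi_2^{-1}$. Given $(\tau,\omega)\in\Otwh$, Proposition~\ref{psi2} produces a unique $(u,\omega)\in\Ouwh$ with $\psi_2(u,\omega)=(\tau,\omega)$; since $\Ouwh\subseteq\Ouw_2$ and $\fb\left(\Ouw_2\right)\subseteq\Ouw_2$, the image $(u_1,\omega_1)=\fb(u,\omega)$ still lies in the domain of $\psi_2$, so
\[
\fc(\tau,\omega)=\psi_2\circ\fb\circ\psi_2^{-1}(\tau,\omega)=\psi_2(u_1,\omega_1)=\left(\tau(u_1,\omega_1),\omega_1\right)=(\tau,\omega+1),
\]
using $\omega_1=\omega+1$ together with the invariance $\tau(u_1,\omega_1)=\tau(u,\omega)$ from Proposition~\ref{Fatou2}.

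It then remains to check that the translation $(\tau,\omega)\mapsto(\tau,\omega+1)$ preserves $\Otwh$. I would unwind the definition of $\Otwh$ by setting $\sigma:=\tau+\log\omega$, so that $(\tau,\omega)\in\Otwh$ is equivalent to the four conditions $\re(\sigma)>R$, $|\sigma|<\delta|\omega|^{\frac{k}{(k-1)(k+1)}}$, $|\Arg(\sigma)|<\theta$, and $|\Arg(\omega)|<\frac{k-1}{k}\theta$. Under the translation the auxiliary variable becomes $\sigma'=\tau+\log(\omega+1)=\sigma+\log\left(1+\tfrac1\omega\right)$, a perturbation of size $\0\left(|\omega|^{-1}\right)$. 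Three of the four conditions are then immediate for $R$ large: adding the positive real number $1$ to $\omega$ (which has $\re(\omega)>0$) can only decrease $|\Arg(\omega)|$ and yields $|\omega+1|>|\omega|$; hence the modulus bound only improves, since the gain $\delta\bigl(|\omega+1|^{\frac{k}{(k-1)(k+1)}}-|\omega|^{\frac{k}{(k-1)(k+1)}}\bigr)\gtrsim|\omega|^{\frac{k}{(k-1)(k+1)}-1}$ dominates the perturbation $\0(|\omega|^{-1})$ once $|\omega|$ is large; and $\re\log\left(1+\tfrac1\omega\right)>0$ keeps $\re(\sigma')>\re(\sigma)>R$.

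The one delicate point — and the main obstacle — is the cone condition $|\Arg(\sigma')|<\theta$: although the angular displacement $|\Arg(\sigma')-\Arg(\sigma)|$ is vanishingly small, $\Arg(\sigma)$ may lie arbitrarily close to $\theta$, so a naive perturbation estimate does not close. I would resolve this exactly as the argument bounds were handled in the proof of Lemma~\ref{Ouvinvariant}, together with the shrinking-domain device of Remark~\ref{Ouv}: one fixes $\Otwh$ with the tight parameter $\theta$ sitting strictly inside the slightly wider cones $\theta<\theta_1<\theta_2$ on which all estimates are valid, and observes that the translation perturbs the argument by an amount much smaller than the angular gap $\theta_1-\theta$, so the tighter cone is carried into itself. (Equivalently, one may write $\sigma'=u_1+\alpha(u_1)+\mu(u_1,\omega_1)$ and invoke the ``max''-type bound $|\Arg(u_1)|\le|\Arg(u)|$ obtained in Lemma~\ref{Ouvinvariant}, still using the slack to absorb the correction $\alpha(u_1)+\mu(u_1,\omega_1)$.)

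Finally, the inclusion $\fo\left(\Oxyh\right)\subseteq\Oxyh$ follows formally from the first. By construction $\Psi=\psi_2\circ\psi_1\circ\psi_0$ restricts to a biholomorphism $\Oxyh\to\Otwh$ conjugating $\fo$ to $\fc$, i.e.\ $\Psi\circ\fo=\fc\circ\Psi$ where both sides are defined; the nested invariances $\fa(\Ouv)\subseteq\Ouv$ (Lemma~\ref{Ouvinvariant}) and $\fb(\Ouw_2)\subseteq\Ouw_2$ guarantee that $\fo$ keeps $\Oxyh$ inside the common domain of definition of $\Psi$. Hence $\Psi\left(\fo(\Oxyh)\right)=\fc\left(\Psi(\Oxyh)\right)=\fc\left(\Otwh\right)\subseteq\Otwh=\Psi\left(\Oxyh\right)$, and applying the inverse biholomorphism $\Psi^{-1}$ gives $\fo\left(\Oxyh\right)\subseteq\Oxyh$. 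The substance of the argument is therefore the geometric verification above, whose only genuine subtlety is the cone condition.
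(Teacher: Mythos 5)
Your overall strategy is the paper's: observe that $\fc$ is literally $(\tau,\omega)\mapsto(\tau,\omega+1)$, set $\sigma'=\sigma+\log(1+\omega^{-1})$, check the four defining inequalities of $\Otwh$ for the translated point, and pull the invariance back to $\Oxyh$ through $\Psi$. Your treatment of $\re(\sigma')>R$, of $|\Arg(\omega+1)|$, and of the modulus condition (the gain $|\omega+1|^{k/((k-1)(k+1))}-|\omega|^{k/((k-1)(k+1))}\gtrsim|\omega|^{k/((k-1)(k+1))-1}$ dominating the $\0(|\omega|^{-1})$ perturbation of $\sigma$) is correct and amounts to the paper's Taylor-expansion computation. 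The reduction of $\fo(\Oxyh)\subseteq\Oxyh$ to $\fc(\Otwh)\subseteq\Otwh$ via the conjugacy is also fine.

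The gap is exactly at the point you flag as the main obstacle, and your proposed fix does not close it. The shrinking-cone device of Remark~\ref{Ouv} ($\theta<\theta_1<\theta_2$) shows that a point of the $\theta$-cone lands in the $\theta_1$-cone after a small perturbation; it cannot show that the $\theta$-cone is carried into \emph{itself}, which is what invariance of the fixed domain $\Otwh$ requires — a point with $\Arg(\sigma)=\theta-\epsilon$ hit by a perturbation of the wrong sign and of size $\gg\epsilon$ would escape the $\theta$-cone while remaining in the $\theta_1$-cone. In the paper that device is used only to control derivatives by Cauchy estimates, never for invariance. The correct argument — which your parenthetical gestures at but misstates — is not perturbative at all: one shows that the \emph{increment} itself lies in the cone, namely $\big|\Arg\log(1+\omega^{-1})\big|=\big|\Arg(\omega^{-1})+\Arg\big(1-\omega^{-1}(\tfrac12+\0(\omega^{-1}))\big)\big|<|\Arg(\omega)|<\tfrac{k-1}{k}\theta<\theta$ (the two summands have opposite signs), and then uses that the convex cone $\{\,|\Arg(z)|<\theta\,\}$ with $\theta<\pi/2$ is closed under addition, so $|\Arg(\sigma')|\le\max\{|\Arg(\sigma)|,|\Arg\log(1+\omega^{-1})|\}<\theta$ exactly, with no slack consumed. (Note that Lemma~\ref{Ouvinvariant} proves precisely this max-type bound, not the inequality $|\Arg(u_1)|\le|\Arg(u)|$ you quote.) With that substitution your proof is complete.
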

\begin{proof}For any $(\hat{\tau},\omega)\in\Otwh$, let $\tau:=\hat{\tau}+\log(\omega)$.  Then $\fc\left(\hat{\tau},\omega\right)=(\hat{\tau},\omega+1)=(\tau-\log(\omega),\omega+1)$.  Let
$$t:=\hat{\tau}+\log(\omega+1)=\tau+\log(1+\omega^{-1}).$$
To show that $(\hat{\tau},\omega+1)=(t-\log(\omega+1),\omega+1)\in\Otwh$, we need:
$$\mbox{(1) }\re(t)>R,\qquad\mbox{ (2) }|t| <(\delta|\omega+1|)^\frac{k}{(k-1)(k+1)},\qquad\mbox{ (3) }|\Arg(t)|<\theta.$$
First of all,
$$\re(t)=\re(\tau)+\log\big|1+\omega^{-1}\big|>R.$$
Secondly,
$$|t|=|\tau| \big|1+\tau^{-1}\log\left(1+\omega^{-1}\right)\big|<(\delta|\omega|)^\frac{k}{(k-1)(k+1)}  \big|1+\tau^{-1}\log\left(1+\omega^{-1}\right)\big|<(\delta|\omega+1|)^\frac{k}{(k-1)(k+1)},$$
where the last inequality follows from the Taylor series expansions: 
\begin{align*}
1+\tau^{-1}\log\left(1+\omega^{-1}\right)=1+\tau^{-1}\omega^{-1}+\0\left(\tau^{-1}\omega^{-2}\right) \\
\left(1+\omega^{-1}\right)^\frac{k}{(k-1)(k+1)}=1+\frac{k}{k^2-1}\omega^{-1}+\0\left(\omega^{-2}\right)
\end{align*}
Finally, 
$$|\Arg(t)|\leq\max\left\{|\Arg(\tau)|,\big|\Arg\log\left(1+\omega^{-1}\right)\big|\right\}<\theta,$$
since
\begin{align*}
\big|\Arg\log\left(1+\omega^{-1}\right)\big| 
	&=\bigg|\Arg(\omega^{-1})+\Arg\left(1-\omega^{-1}\left(\frac{1}{2}+\0\left(\omega^{-1}\right)\right)\right)\bigg| \\
	&<|\Arg(\omega)|<\theta.
\end{align*}
Note that the last inequality follows because $\Arg(\omega^{-1})$ and $\Arg(1-\omega^{-1})$ have opposite signs.  Therefore, 
$$\fc\left(\Otwh\right)\subset\Otwh\qquad\Rightarrow\qquad \fo\left(\Oxyh\right)\subset\Oxyh.$$
\end{proof}
Also the origin is in the boundary of $\Omega=l\left(\Oxyh\right)$, where $l$ is the bilinear map~\eqref{l}.  Given any $(x,y)\in\Oxyh$, we showed that $(x_n,y_n)\in\Oxyh, \forall n\in\N$.  Since $(x_n,y_n)=\psi_0^{-1}(u_n,v_n)$ and $|u_n|,|v_n|\to\infty$, the sequence $\{ (x_n,y_n)\}$ converges to the origin and so $(0,0)\in\partial\Oxyh\cap\partial\Omega$.   \\

To summarize, we have shown that there is a domain $\Omega$ with the origin in its boundary that is biholomorphic to $\Otwh\subset\C^2$ and on the latter, $f$ acts as the identity on the first coordinate and translation on the second.  What follows is a diagram of all the coordinate changes we performed to get that result:
\begin{diagram}
&\Omega &\rInto^f& \Omega &\  & & & & \\
&\dTo^{l^{-1}}_{\mbox{bilinear}}     &&\dTo^{l^{-1}}&& 						  &&  & \\
(x,y)\in&\Oxyh&\rInto^\fo&\Oxyh&\rInto^i &\psi_0^{-1}\left(\Ouv_2\right)&\rInto^i&\psi_0^{-1}\left(\Ouv\right) & \supset\Oxyh\ni(x_1,y_1)\\
\dMapsto^{\Psi}&\dTo^{\psi_0}_{\mbox{biholo.}}     &&\dTo^{\psi_0}&&\dTo^{\psi_0}							  &&\dTo^{\psi_0} & \ldMapsto[3,2] \\
&\Ouvh&\rInto^\fa&\Ouvh&\rInto^i&\Ouv_2 						    &\rInto^i&\Ouv& \\
&\dTo^{\psi_1}_{\mbox{biholo.}}    &&\dTo^{\psi_1}&&\dTo^{\psi_1}	  &&& \\
&\Ouwh&\rInto^\fb&\Ouwh&\rInto^i&\Ouw_2	& & &  \\
& \dTo^{\psi_2}_{\mbox{biholo.}}     &&\dTo^{\psi_2}&  		&				 &&&  \\
(\tau,\omega)\in &\Otwh&\rInto^\fc&\Otwh&\ni(\tau_1,\omega_1)=(\tau,\omega+1) &&&&
\end{diagram}

\section{Fatou-Bieberbach Domain}
Now that we have finished demonstrating Theorem A\ref{thmA}, we turn to Theorem B\ref{thmB}.  In order to apply our previous results, we want to assume that $f$ as in Theorem A is an automorphism.  From Theorem~\ref{WThm2.1.1}, due independently to Weickert \cite{W1} and Buzzard-Forstneric \cite{BF}, we know there exist automorphisms of $\C^2$ that approximate $f$ very closely near the origin.  In the power series expansions of $f$ and $\fo$ near the origin, we have only explicitly used terms up to degree at most $2k-1$.  By Theorem~\ref{WThm2.1.1}, there is an automorphism whose Taylor series expansion near the origin agrees with that of $f$ up through its degree $2k$ terms.  We now assume that $f$ is an automorphism of $\C^2$ since it is possible to have an automorphism $f$ as in Theorem A\ref{thmA}. Then $\fo=l^{-1}\circ f\circ l$ is an automorphism with the same general form as before in~\eqref{fo}.  Let 
$$\Sxyo:=\bigcup_{n\geq 0}\fo^{-n}\left(\Oxyh\right)\qquad\mbox{ and }\qquad
\s:=l\left(\Sxyo\right)=\bigcup_{n\geq 0}f^{-n}\left(l\left(\Oxyh\right)\right).$$

We can extend the domains of definition of $\omega$ and $\tau$ to $\Sxyo$:
\begin{align*}
\tau\circ\psi_1\circ\psi_0(x,y) &:=\tau\circ\psi_1\circ\psi_0(x_n,y_n), \\
\omega\circ\psi_0(x.y) &:=\omega\circ\psi_0(x_n,y_n)-n,
\end{align*}
where $\fo^n(x,y)=(x_n,y_n)\in\Oxyh$ for some $n\in\N$.  These are well-defined so we can use them to extend the domain of definition of $\Psi$ to $\Sxyo$: 
$$\Psi(x,y):=\psi_2\circ\psi_1\circ\psi_0(x,y)=\Psi(x_n,y_n)-(0,n),$$
where $\fo^n(x,y)=(x_n,y_n)\in\Oxyh$ for some $n\in\N$.  Since $\Psi$ is holomorphic on $\Oxyh$ and $\fo$ is biholomorphic on $\C^2$, this extension of $\Psi$ is holomorphic on $\Sxyo$.  In the following, we want to show that $\Psi\left(\Sxyo\right)=\C^2$.  
\begin{align*}
\pi_2\circ\Psi\left(\Sxyo\right)
	&=\omega\circ\psi_0\left(\Sxyo\right) \\
	&=\bigcup_{n\geq 0}\left\{\omega\circ\psi_0(x_n,y_n)-n \ | \ (x_n,y_n)\in\Oxyh\right\} \\
	&=\bigcup_{n\geq 0}\left\{\omega\left(\Ouvh\right)-n \right\} \\
	&=\bigcup_{n\geq 0}\left\{\pi_2\left(\Otwh\right)-n \right\} \\
	&=\C
\end{align*}
where the third equality follows since $\fo$ is an automorphism of $\C^2$ and the final equality follows from the definition of $\Otwh$ given in Proposition~\ref{psi2}.  For any $w\in\C$ and $n\in\N$, let 
$$W_n^w:=(\omega\circ\psi_0)^{-1}(w)\cap \fo^{-n}\left(\Oxyh\right).$$
Then $(\omega\circ\psi_0)^{-1}(w)\cap\Sxyo=\bigcup_{n\geq 0}W_n^w.$

\begin{thm}Fix any $w\in\pi_2\left(\Otwh\right)$.  Then
$$\tau\circ\psi_1\circ\psi_0=\pi_1\circ\Psi:(\omega\circ\psi_0)^{-1}(w)\cap\Sxyo\to\C$$
is a biholomorphism.\end{thm}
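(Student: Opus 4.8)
The plan is to show that the restricted map is a holomorphic bijection that is moreover a local biholomorphism, whence biholomorphic. The backbone is the conjugacy $\Psi\circ\fo=\fc\circ\Psi$, which holds on all of $\Sxyo$ by the way $\omega$ and $\tau$ were extended, together with the fact that $\fc$ acts as $(\tau,\omega)\mapsto(\tau,\omega+1)$. In particular $\pi_1\circ\Psi=\tau\circ\psi_1\circ\psi_0$ is $\fo$-invariant, while $\pi_2\circ\Psi=\omega\circ\psi_0$ increases by $1$ under $\fo$. First I would record that $\Psi$ is a local biholomorphism on $\Sxyo$: near any $p\in\Sxyo$ choose $n$ with $\fo^n(p)\in\Oxyh$ and write $\Psi(p)=\Psi(\fo^n(p))-(0,n)$, a composition of the automorphism $\fo^n$, the biholomorphism $\Psi|_{\Oxyh}\colon\Oxyh\to\Otwh$, and a translation. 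Consequently $V_w:=(\omega\circ\psi_0)^{-1}(w)\cap\Sxyo$ is a smooth $1$-dimensional complex submanifold and $\pi_1\circ\Psi|_{V_w}$ is a local biholomorphism into $\C$.

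For injectivity, suppose $p,q\in V_w$ satisfy $\pi_1\circ\Psi(p)=\pi_1\circ\Psi(q)$; since both lie in $V_w$ their $\pi_2\circ\Psi$-values also agree, so $\Psi(p)=\Psi(q)$. Choosing $n$ large enough that $\fo^n(p),\fo^n(q)\in\Oxyh$ (possible since both points lie in $\Sxyo$ and $\Oxyh$ is forward invariant), the conjugacy gives $\Psi(\fo^n p)=\fc^n\Psi(p)=\fc^n\Psi(q)=\Psi(\fo^n q)$, and injectivity of $\Psi$ on $\Oxyh$ together with injectivity of the automorphism $\fo$ forces $p=q$.

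Surjectivity is the crux. Writing $V_w=\bigcup_{n\geq0}W_n^w$, I would apply $\fo^n$ to each piece: since $\fo$ shifts the level sets of $\omega\circ\psi_0$ by one, $\fo^n(W_n^w)=(\omega\circ\psi_0)^{-1}(w+n)\cap\Oxyh$ is the full slice of $\Oxyh$ at height $w+n$, which $\Psi$ carries biholomorphically onto $\Otwh\cap\{\pi_2=w+n\}$. Because $\pi_1\circ\Psi$ is $\fo$-invariant, its image on $W_n^w$ equals $\pi_1\bigl(\Otwh\cap\{\pi_2=w+n\}\bigr)$, so it remains to check that these first-coordinate slices exhaust $\C$ as $n\to\infty$. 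For a fixed target $\zeta_0$ one solves $\zeta-\log(w+n)=\zeta_0$ for the free variable $\zeta$ and verifies, using $\re\log(w+n)\to\infty$ and $\Arg(w+n)\to0$, that for $n$ large the point $\zeta=\zeta_0+\log(w+n)$ meets the defining inequalities of $\Otwh$ at height $w+n$: its real part exceeds $R$, its argument is below $\theta$, and its modulus stays below $\delta|w+n|^{k/((k-1)(k+1))}$ since $\log n$ grows more slowly than any positive power of $n$. This is the step where the precise shape of $\Otwh$ is essential — the leftward drift by $\log\omega$ encoded in the $\tau$-coordinate together with the polynomial growth of the admissible radius — and it is the main obstacle.

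Finally, a holomorphic bijection between $1$-dimensional complex manifolds that is a local biholomorphism is biholomorphic, so $\pi_1\circ\Psi=\tau\circ\psi_1\circ\psi_0\colon V_w\to\C$ is a biholomorphism, as claimed.
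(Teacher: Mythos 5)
Your proposal is correct and follows essentially the same route as the paper: holomorphy of the extension via $\tau\circ\psi_1\circ\psi_0=\tau\circ\psi_1\circ\psi_0\circ\fo^n$, injectivity by pushing two points forward into $\Oxyh$ and invoking injectivity of $\Psi$ there and of $\fo$, and surjectivity by identifying the image of $W_n^w$ with the first-coordinate slice of $\Otwh$ at height $w+n$ and observing that $|w+n|^{k/((k-1)(k+1))}$ outgrows $|\log(w+n)|$. Your explicit check that $\zeta_0+\log(w+n)$ eventually satisfies the defining inequalities of $\Otwh$ is just a more detailed version of the paper's one-line growth comparison.
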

\begin{proof}
First we show that the extension of $\tau\circ\psi_1\circ\psi_0$ to $\Sxyo$ is holomorphic.  For any $(x,y)\in\Sxyo, \exists n\in\N$ such that $(x_n,y_n)\in\Oxyh$.  Let $U$ be a connected open neighborhood of $(x,y)$ small enough that $\fo^n(U)\subset\Oxyh$, which is then a connected open neighborhood of $(x_n,y_n)$.  Therefore  $\tau\circ\psi_1\circ\psi_0$ is holomorphic on $\fo^n(U)$.  Since $\fo$ is holomorphic and
$\tau\circ\psi_1\circ\psi_0=\tau\circ\psi_1\circ\psi_0\circ \fo^n$, it follows that $\tau\circ\psi_1\circ\psi_0$ is holomorphic on $U$.  Hence $\tau\circ\psi_1\circ\psi_0$ is holomorphic on $\Sxyo$. \\
%$\tau$ is injective
Now we show injectivity.  Suppose $\tau\circ\psi_1\circ\psi_0(x,y)=\tau\circ\psi_1\circ\psi_0(\wt{x},\wt{y})$ for some $(x,y),(\wt{x},\wt{y})\in(\omega\circ\psi_0)^{-1}(w)\cap\Sxyo$.  For $n\in\N$ large enough we have $(x_n,y_n),(\wt{x}_n,\wt{y}_n)\in\Oxyh$ and
$$\tau\circ\psi_1\circ\psi_0(x_n,y_n)=\tau\circ\psi_1\circ\psi_0(x,y)
=\tau\circ\psi_1\circ\psi_0(\wt{x},\wt{y})=\tau\circ\psi_1\circ\psi_0(\wt{x}_n,\wt{y}_n).$$
Then
$$\pi_2\circ\Psi(x,y)=w=\pi_2\circ\Psi\left(\wt{x},\wt{y}\right)\qquad\Rightarrow\qquad
\pi_2\circ\Psi(x_n,y_n)=w+n=\pi_2\circ\Psi\left(\wt{x}_n,\wt{y}_n\right).$$
$\Psi$ is injective on $\Oxyh$ and $\fo$ is injective on $\C^2$, therefore
$$\Psi(x_n,y_n)=\Psi\left(\wt{x}_n,\wt{y}_n\right)\quad\Rightarrow\quad
(x_n,y_n)=\left(\wt{x}_n,\wt{y}_n\right)\quad\Rightarrow\quad
(x,y)=\left(\wt{x},\wt{y}\right).$$ \\
%$\tau$ is surjective: \\
Finally we show surjectivity. \\
\textit{Claim:}  $\tau\circ\psi_1\circ\psi_0\left(W_n^w\right)
=\tau\circ\psi_1\circ\psi_0\left((\omega\circ\psi_0)^{-1}(w+n)\cap\Oxyh\right)$. \\
% \subseteq
For any $(x,y)\in W_n^w$, it follows that $(x_n,y_n)\in\Oxyh$ and $\omega\circ\psi_0(x_n,y_n)=w+n$.  Hence $(x_n,y_n)\in(\omega\circ\psi_0)^{-1}(w+n)\cap\Oxyh$ and $\tau\circ\psi_1\circ\psi_0(x,y)=\tau\circ\psi_1\circ\psi_0(x_n,y_n).$ \\
%\supseteq
Conversely, for any $(x,y)\in(\omega\circ\psi_0)^{-1}(w+n)\cap\Oxyh$, $\exists (\wt{x},\wt{y})\in\C^2$ such that $(\wt{x}_n,\wt{y}_n)=(x,y)$ since $\fo$ is an automorphism of $\C^2$.  
Then $\omega\circ\psi_0(\wt{x},\wt{y})=w$ so $(\wt{x},\wt{y})\in W_n^w$ and $\tau\circ\psi_1\circ\psi_0(x,y)=\tau\circ\psi_1\circ\psi_0(\wt{x},\wt{y}).$ \\
Therefore, $\tau\circ\psi_1\circ\psi_0(W_n^w)
=\tau\circ\psi_1\circ\psi_0\left((\omega\circ\psi_0)^{-1}(w+n)\cap\Oxyh\right).$ \\

Fix any $n\in\N$.  
\begin{align*}
\tau\circ\psi_1\circ\psi_0(W_n^w) 
	&=\tau\circ\psi_1\circ\psi_0\left((\omega\circ\psi_0)^{-1}(w+n)\cap\Oxyh\right) \\
	&=\tau\circ\psi_1\left((\pi_ 2\circ\psi_1)^{-1}(w+n)\cap\Ouvh\right) \\
 	&=\bigg\{\tau(u,w+n) \ \bigg| \ (u,w+n)\in\Ouwh\bigg\}  \\
	&=\left\{\tau-\log(w+n) \ \bigg| \ \re(\tau)>R,|\tau|<\left(\delta|w+n|\right)^\frac{k}{(k-1)(k+1)},
	|\Arg(\tau)|<\theta\right\},
\end{align*}
where the last equality follows because $\psi_2:\Ouwh\to\Otwh$ is a biholomorphism.  For fixed $w$, $|w+n|^\frac{k}{(k-1)(k+1)}$ grows much faster than $|\log(w+n)|$ as $n\to\infty$.  Therefore, 
$$\pi_1\circ\Psi\left((\omega\circ\psi_0)^{-1}(w)\cap\Sxyo\right)=\bigcup_{n\geq 0}\tau\circ\psi_1\circ\psi_0(W_n^w)=\C.$$
\end{proof}
For $n\in\N$, let
\begin{align*}
\Omega_n &:= \bigcup_{w\in\pi_2\left(\Otwh\right)} (\omega\circ\psi_0)^{-1}(w-n)\cap\Sxyo \\ 
\Psi_n &:=\left(\pi_1\circ\Psi\circ\fo^n,\pi_2\circ\Psi\right)=\Psi\circ\fo^n-(0,n).
\end{align*}
Then
$$\Psi_n:\Omega_n\to\C\times\left\{\pi_2\left(\Otwh\right)-n\right\}$$
is a biholomorphism and $\bigcup_{n\in\N}\Omega_n=\Sxyo$.  So $\left\{\Omega_n\right\}_{n\in\N}$ is an open cover of $\Sxyo$ with coordinate functions $\{\Psi_n\}_{n\in\N}$ which agree with each other on overlaps.  This defines on $\Sxyo$ a structure of a locally trivial fiber bundle with base $\C$ and fiber $\C$.  Hence,
$$\Psi:\Sxyo\to\C^2$$
is a biholomorphism and
$$\Psi(x,y)=\Psi\circ\fo^n(x,y)-(0,n)$$
for any $(x,y)\in\Sxyo$ and $n\in\N$.  In addition, $\fo$ acts as translation:
$$\Psi\circ\fo(x,y)=\Psi(x,y)+(0,1)$$
for any $(x,y)\in\Sxyo$.  Recall that $\s=l\left(\Sxyo\right)$ and let $\Phi:=\Psi\circ l^{-1}$.  In summary, we have the commutative diagram that illustrates Theorem B:
\begin{diagram}
\s & \rTo^f & \s \\
\dTo^\Phi &  & \dTo_\Phi \\ 
\C^2 & \rTo_{\Id+(0,1)} & \C^2
\end{diagram}
where all maps are biholomorphisms.\\

\end{document}